\newcommand{\nortwo}[1]{\norm{#1}_{_{2}}}
\newcommand{\spt}[1]{\langle #1 \rangle_{_{2}}}
\newcommand{\spta}[1]{\left\langle #1 \right\rangle_{2}}
\newcommand{\sptb}[1]{\Bigg\langle #1 \Bigg\rangle_{2}}
\newcommand{\eequ}{\end{equation}}
\newcommand{\bequ}{\begin{equation}}
\newcommand{\eequd}{\end{eqnarray*}}
\newcommand{\bequd}{\begin{eqnarray*}}
\def\R{\mathbb{R}}
\def\M{\mathcal{M}}
\def\E{\mathrm{E}}
\newtheorem{theorem}{Theorem}[section]
\newtheorem{lemma}[theorem]{Lemma}
\theoremstyle{definition}
\newtheorem{remark}[theorem]{Remark}
\def\R{\mathbb{R}}
\title{Condition number for finite element discretisation of nonlocal PDE systems with applications to biology}
\begin{document}

\author{\centerline{\scshape
Olusegun E. Adebayo$^{{\href{mailto:olusegun.adebayo@univ-fcomte.fr}{\textrm{\Letter}}}\,1}$, 
 Raluca Eftimie$^{{\href{mailto:raluca.eftimie@univ-fcomte.fr; r.a.eftimie@dundee.ac.uk}{\textrm{\Letter}}}*1,2}$ and Dumitru Trucu$^{{\href{mailto:D.Trucu@dundee.ac.uk}{\textrm{\Letter}}}*2}$
 }
 }
\maketitle

\medskip

{\footnotesize
 \centerline{$^1$Laboratoire de Math\'{e}matiques de Besan\c{c}on, UMR CNRS 6623,}
} 
{\footnotesize\centerline{University of Franche-Comt\'{e}, Besan\c{c}on 25000, France}}
\medskip

{\footnotesize
 \centerline{$^2$Division of Mathematics, University of Dundee, Dundee, DD1 4HN, United Kingdom}
}

\begin{abstract} 
 In this work, we investigate the condition number for a system of coupled non-local reaction-diffusion-advection equations developed in the context of modelling normal and abnormal wound healing. 
 Following a finite element discretisation of the coupled non-local system, we establish bounds for this condition number. 
 
We further discuss how model parameter choices affect the conditioning of the system. Finally, we discuss how the step size of the chosen time-stepping scheme and the spatial grid size of the finite element methods affect the bound for the condition number, while also suggesting possible parameter ranges that could keep the model well conditioned.
\end{abstract}

\section{Introduction}
The condition number of the associated matrix to a partial differential equation (PDE) is crucial to understanding the stability and robustness of its solution to perturbation in its inputs~\cite{ErnGuermond2006}. A small change in the matrix of an ill-conditioned PDE will result in large changes in the approximation of the solution, while for a well-conditioned PDE a small change in the associated matrix would lead to a small change in the approximated solution~\cite{ErnGuermond2006}. A known fact is that the finite element method for solving PDEs generally leads to linear systems involving large and sparse matrices that are usually solved using iterative techniques, and the condition number of the associated matrix can be used to estimate the rate of convergence of such iterative techniques~\cite{ErnGuermond2006, Bathe2014}. 
Furthermore, while solving linear systems of equations (assuming a direct solver is used) the number of significant digits in the solution decreases as the condition number increases (i.e., the number of digits that one cannot rely upon is proportional with $\log(condition\textrm{ }number)$) \cite{Bathe2014,dalhquist74}. It is also important to note that when dealing with iterative solution schemes, the number of iterations required to attain a pre-established error threshold also depends on the condition number, as systems with high condition numbers require more iterations \cite{Bathe2014}.

We emphasize that a variety of numerical studies have investigated the condition numbers of PDEs approximated using finite elements; see for example,~\cite{KimPark1998, ErnGuermond2006, EISENTRAGER20202289, GlimmConditionNbrNonlocal, Kannan2014, Chavent2003,ErnGuermond2006, FengLewis2018, Astuto2023, Hong2024, Arbogast1995, Mabuza2018Local} and references therein. However, the vast majority of these studies have been applied to local PDEs. Moreover, most of these studies focus on local parabolic and elliptic-type equations \cite{Chavent2003,ErnGuermond2006, FengLewis2018, Astuto2023, Hong2024}, with only a few focusing on local hyperbolic/transport-type equations \cite{Arbogast1995, Mabuza2018Local}.

Non-local models of diffusion-advection or reaction-diffusion-advection types have been developed over the past two-three decades to describe the long-distance interactions between different components of biological, medical and ecological systems~\cite{LeeMogilner2001, MarguetEftimieLozinski, PainterHillenPotts2024, PalMelnik2025}. For example, such non-local models been developed to describe the collective dynamics of cells in the context of disorders such as cancer (e.g., interactions among cancer cells, and interactions between cells and the tissue micro-environment)~\cite{BitsouniEtAl2018,BitsouniChaplainEftimie2017,MarguetEftimieLozinski,DomschkeTrucuGerischChaplain,EckardtPainterSurulescuZhigun,PainterHillenPotts2024}, or in morphogenesis~\cite{GlimmConditionNbrNonlocal}, or to describe the collective dynamics of individuals in ecological settings (e.g., interactions among individuals, or between individuals and the environment)~\cite{EllefsenRodriguez,PalMelnik2025,MarguetEftimieLozinski,PainterHillenPotts2024,LeeMogilner2001}.

To our knowledge, there are extremely few studies that investigate the condition number of coupled non-local time-varying PDEs; see, for example,~\cite{GlimmConditionNbrNonlocal}, which focuses on a finite difference (not finite element) discretisation of a non-local model describing a medical problem. Therefore, to address this lack of results for non-local PDE models discretised via the finite element method, in this work we aim to study the condition number for a system of coupled non-local reaction-diffusion-advection equations developed in the context of normal and abnormal wound healing, which was previously introduced in~\cite{AdebayoEtAl2023}. The non-local terms appear in the transport (advection) term of the 2D model introduced in~\cite{AdebayoEtAl2023} (and described in Section~\ref{Sec:Model description} below).

The paper is structured as follows. In Section~\ref{sec:cond-1}, we recall some elementary details regarding the condition such as its definition and its dependent on the property of the matrix being considered. In Section~\ref{Sec:Model description}, we describe the mathematical model considered. In Section~\ref{Sec:condSyst}, we derive a bound for the model's condition number of the model and a general discussion of the results. In Section~\ref{Sec:summary}, we conclude with a summary of the results.

\section{Brief overview of the condition number of a non-singular matrix}
\label{sec:cond-1}
The condition number is an important measure that identifies the sensitivity of a system to perturbations in its input, in other words, how much the solution of a system will change with respect to a small change or a small error in its input. Hence, it checks the sensitivity of the solution to round-off errors -- an important aspect when solving large systems via numerous iterations. It is a known fact that a very large condition number in a system leads to a huge loss in the number of significant digits in its solution~\cite{Bathe2014, EISENTRAGER20202289, dalhquist74}.
A system with a low condition number is called \emph{well-conditioned}, while a system with a high condition number is called \emph{ill-conditioned}. \\
In brief, for any square non-singular matrix $\M\in \mathbb{R}^{I\times I}$, the condition number, usually denoted by $k(\M)$, is defined as 
\begin{equation*}
k(\mathcal{M}):= \frac{\sigma_{_{\max}}(\mathcal{M})}{\sigma_{_{\min}}(\mathcal{M})}
\end{equation*}
where $\sigma_{max}(\M)$  and $\sigma_{min}(\M)$ are the maximum and minimum singular values of matrix $\M$, respectively. Finally, for completion, we recall here that the singular values are the eigenvalues of the square root of the matrix $\M^{T}\M$, where where $\M^{T}$ is the adjoint of $\M$. Moreover, for the special case of symmetric positive definite matrices, we have that the the singular values coincide with the eigenvalues of the matrix, and in this case the condition number can be defined as
\begin{equation}
    k(\mathcal{M})=\frac{\Lambda_{_{\max}}(\mathcal{M})}{\Lambda_{_{\min}}(\mathcal{M})},
\end{equation}
where $\Lambda_{_{\max}}(\mathcal{M}), \Lambda_{_{\min}}(\mathcal{M})$ are the maximum and minimum eigenvalues, respectively.

\section{Description of non-local PDE model}
\label{Sec:Model description}
In this study, we consider the following non-local model for wound healing, which was previously introduced in~\cite{AdebayoEtAl2023} and that focuses on the interactions between a growth factor ($g(\mathbf{x},t)$), fibroblasts ($f(\mathbf{x},t)$), macrophages ($m(\mathbf{x},t)$) and the extracellular matrix ($e(\mathbf{x},t)$):
\begin{subequations}
\label{EqModel}
\begin{align}
\frac{\partial g}{\partial t} &= D_{_{g}}\Delta g - \lambda_{_{g}} g+ p^{^f}_{_g} f + p^{^m}_{_g} m,\label{EqModel;a}\\ 
\frac{\partial f}{\partial t} &= \nabla \cdot (D_{_{f}}\nabla f - \mu_{f}f A_{_{f}}\left[{u}\right]) -\lambda_{_{f}} f+p_{_{f}}{g}\; f\;(1-\rho(u)), \label{EqModel;b}\\
\frac{\partial m}{\partial t} &= \nabla \cdot (D_{_{m}}\nabla m - \mu_{m}m A_{_{m}}\left[{u}\right]) - \lambda_{_{m}} m +p_{_{m}}{g}\; m\;(1-\rho(u)), \label{EqModel;c}\\
\frac{\partial e}{\partial t} &= - e (\alpha_{_{f}}f+\alpha_{_{m}}m+{\alpha_{e}}) +p_{_{e}}{f} e (1-\rho(u))+{e_{c}}.\label{EqModel;d}
\end{align}
\end{subequations}
 The equations above rely on the following assumptions (detailed in~\cite{AdebayoEtAl2023}): \begin{enumerate}
 \item[(a)] The growth factor $g$ diffuses with a coefficient $D_{g}$, decays at a rate $\lambda_{g}$, and is produced by fibroblasts and macrophages at rates $p_{g}^{f}$ and $p_{g}^{m}$, respectively; 
 \item[(b)] The fibroblasts $f$ exhibit random motion with a coefficient $D_{f}$ and directed movement with a coefficient $\mu_{f}$, undergo apoptosis at a constant rate $\lambda_{f}$, and proliferate at a rate $p_{f}$ in response to the growth factor; 
 \item[(c)] The macrophages $m$ move randomly with a coefficient $D_{m}$ and directionally with a coefficient $\mu_{m}$, die at a constant rate $\lambda_{m}$, and proliferate at a rate $p_{m}$ in the presence of the growth factor; 
 \item[(d)] The extracellular matrix (ECM) $e$ is remodeled by fibroblasts at a rate $p_{e}$ and degraded by enzymes produced by fibroblasts and macrophages at rates $\alpha_{f}$ and $\alpha_{m}$, respectively~\cite{ZhaoEtAl2022}. Additionally, it is assumed that ECM components can be secreted at a low rate $e_{c}$ by other cells in the environment (e.g., endothelial cells~\cite{XueEtAl2015}; not accounted for here) and degraded at a low rate $\alpha_{e}$ by enzymes produced by other cells (e.g., keratinocytes and endothelial cells~\cite{XueEtAl2015}; not accounted for here).
 \end{enumerate}

Lastly, the system is defined on a 2D spatial domain with zero-flux boundary conditions \textit{i.e.,}

\begin{equation}
\spta{\nabla g, n} = 0,\quad \text{ on } (0, \infty) \times \partial\Omega,
\label{EqNonlocalFlux:ExisUniq}
\end{equation}
{and}
\begin{equation}
\spta{D_{_{f}}\nabla f - \mu_{f}f A_{_{f}}\left[{u}\right], n} =\spta{D_{_{m}}\nabla m - \mu_{m}m A_{_{m}}\left[{u}\right], n}  =0,\quad \text{ on } (0, \infty) \times \partial\Omega ,
\label{EqBC:ExisUniq}
\end{equation}
{where $n$ is the outward unit normal vector to $\partial \Omega$, and $ \spta{ \cdot,\cdot }$ is the usual Euclidean scalar product in finite dimension.
To complete the model description, we assume the following initial conditions of the form:}
\bequ
{\begin{array}{ll}
          {g(\mathbf{x},0) = g_{_0}(\mathbf{x})\ge 0},& \quad  {f(\mathbf{x},0) = f_{_0}(\mathbf{x})\ge 0,}\\[0.2cm] 
         {m(\mathbf{x},0) = m_{_0}(\mathbf{x})\ge 0,}& \quad  {e(\mathbf{x},0) = e_{_0}(\mathbf{x})\ge 0,} \quad{\text{ for all } \mathbf{x}\in\Omega.}
\end{array}}
\label{EqIC:ExisUniq}
\eequ
\noindent Note that in \cite{AdebayoEtAl2023}, it was assumed that the ECM can be produced and degraded only by the macrophages and the fibroblasts considered in this system (i.e., $\alpha_{e}=e_{c}=0$). Here, we take the same approach as in \cite{AdebayoTrucuEftimie2025} and we assume that $\alpha_{e},e_{c} \neq 0$.
Moreover, in equations \eqref{EqModel}, the term $\rho(u): = w_{_{g}} g + w_{_{f}} f + w_{_{m}} m + w_{_{e}} e$ represents the combined volume fraction of space occupied by the various components of the system. Here, $w_{_{g}}, w_{_{f}}, w_{_{m}}, w_{_{e}}>0$ are coefficients that indicate how much space is occupied by the growth factor, fibroblasts, macrophages, and ECM, respectively. The functions ${A_{i}[u],i\in{f,m}}$ that appear in equations \eqref{EqModel} describe the adhesive interactions among cells, and between cells and the ECM within a specific sensing region. This sensing region is denoted by $\textbf{B}_{_{\|\cdot\|_{_{\infty}}}}\!\!(\textbf{0},R)$, which represents the area surrounding a point $\textbf{x}$, extending outwards by a distance $R$. These interactions are examined at each point in time $t\in[0,T]$. The region $\textbf{B}_{_{\|\cdot\|_{_{\infty}}}}\!\!(\textbf{0},R)$ is essentially a square with a side length of $2R$, centered at the origin.
In simple terms, the functions $A_{{i}}[u]$ capture how cells interact with each other and with the ECM within this defined region. These interactions are depicted by the following non-local terms, which are described in detail in~\cite{AdebayoEtAl2023}.
\begin{equation}    
        {A_{i}[u](\textbf{x},t)} = \frac{1}{R}\,\,{\int\limits_{\textbf{B}_{_{\|\cdot\|_{_{\infty}}}}\!\!(\textbf{0},R)}}\!\!\!\mathrm{K}(\|\mathbf{y}\|_{_{2}})  \mathbf{n}(\mathbf{y}){\left((1-\rho(u)\right)^{+}\,\mathbf{\Gamma}_{_i}(u))}(\mathbf{x}+\mathbf{y},t)\, \text{d}\mathbf{y}, \quad\quad {i\in\{f,m\}}.
   \label{non-local term}
\end{equation}
The term $\left(1-\rho(u)\right)^{+}:= \max\{0, 1-\rho(u)\}$ is used to prevent overcrowding by ensuring that the volume fraction does not exceed a maximum value. This function returns the positive difference between 1 and $\rho(u)$, or 0 if $\rho(u)$ is greater than or equal to 1.
For any point $y$ within the region $\textbf{B}_{_{\|\cdot\|_{_{\infty}}}}\!\!(\textbf{0},R)$, the vector $\mathbf{n(y)}$ represents the unit radial vector starting from $\mathbf{x}$ and pointing towards the location $\mathbf{x+y}$. This vector is defined for all $\mathbf{y}$ in the region $\textbf{B}_{_{\|\cdot\|_{_{\infty}}}}\!\!(\textbf{x},R)$ as follows:
\begin{equation}
\mathbf{n(y)} :=  \begin{cases}
         \frac{\mathbf{y}}{\|\mathbf{y}\|_{_{2}}}, & \text{if } \mathbf{y}\in [-R,R]^{^{2}}\setminus \{(0, 0)\}\\
        (0, 0), & \text{otherwise, }\end{cases}
        \label{surf:normal:def}
\end{equation}
where $\textbf{x}:=(x_{_1}, x_{_2})$,  $\textbf{y}:=(y_{_1}, y_{_2})$ and  $\|\mathbf{y}\|_{_{2}} = \sqrt{y_{_1}^{^2}+y_{_2}^{^2}}$. Finally, $\mathbf{\Gamma}_{_{i}} {(u)}: \textbf{B}_{_{\|\|_{_{\infty}}}}(\mathbf{x},R)\to~\R$ 
cumulates the strengths of the adhesive junctions formed over the sensing region at time $t\in[0,T]$ and are given as
\begin{equation*}
\label{Eq:AdhesionType}
    {\mathbf{\Gamma}_{_{i}}(u) =\sum_{j\in\{f, m, e\}}\mathbf{S}_{_{ij}}(u)\,j,\quad \quad i\in\{f, m\}, \,j\in\{f, m, e\}.} 
\end{equation*}
 {This function describes the adhesive interactions between cell population $i$ and cell population $j$, $\forall\, i, j\in\{f,m\}$, as well as the adhesive interactions between each cell population and the ECM. Thus for a compact notation, we have $\mathbf{S}_{_{ij}}$ given by}
\begin{equation*} 
    {\mathbf{S}_{_{ij}}(u):=\mathbf{S}_{_{ij}}^{^{\max}}\frac{e + g}{1 + e + g}, \quad i\in\{f, m\},\, j\in\{f, m, e\}.}
    \label{AdhesionFunct2}
\end{equation*}
{To account for the dependence of the strength of the cell-cell/cell-matrix interactions on the spatial distribution of cells and the spatial distribution of ECM inside the sensing region, we were inspired here by the experimental evidence regarding the distribution of cell-cell sensing distance observed in cancerous tissues \cite{Parra2021}. As in \cite{AdebayoEtAl2023}, the main kernel that we consider is the following radial-dependent kernel~\cite{BitsouniEtAl2018, AdebayoEtAl2023}}:
\begin{equation}
        \mathrm{K}(\|\mathbf{y}\|_{_{2}}) = \displaystyle \frac{\|\mathbf{y}\|_{_{2}}}{2\pi\sigma^{^{2}}}\,e^{^{-{\|\mathbf{y}\|_{_{2}}^{^{2}}\big/2\sigma^{^{2}}}}}.
        \label{kernel:Gaussian}
\end{equation}
This enables distance-induced weighting for these adhesion interactions on the sensing region \textbf{B}(\textbf{x}, R) between cells or ECM distributed at spatial location \textbf{x+y} and the cells distributed at \textbf{x}. 

An example of the spatio-temporal dynamics obtained with model~\eqref{EqModel} is shown Figure~\ref{fig:model_dynamic} (Appendix~\ref{Sect:Appendix3}), where one can see a repair of an initial linear wound (i.e., cut) in the 2D tissue domain, for the specific parameter values listed in the Appendix. 
\section{Brief discussion on numerical discretization}

Here we describe the approaches we considered to discretize numerically model~\eqref{EqModel} (for the simulations shown in Figure~\ref{fig:model_dynamic}, and for the analysis presented below). First, we write the model equations together with the boundary conditions using the following compact notation:
\begin{equation}
    \frac{\partial u}{\partial t} = \mathrm{E}(u), \quad u(\mathbf{x},0) = u_0, \quad \frac{\partial u}{\partial n}\bigg|_{\partial \Omega} = 0, \label{EqModelCompact}
\end{equation}
where  \(u = (u_{1}, u_{2},u_{3},u_{4}) = (g, f, m, e)^T \in (H^1(\Omega; [0, T]))^4 \), over \( \Omega \times [0, T] \),
and \(\mathrm{E}(\cdot):=(\E_{u_{1}}(\cdot), \E_{u_{2}}(\cdot),\) \(\E_{u_{3}}(\cdot), \E_{u_{4}}(\cdot))^{T}\) is the right-hand-side spatial operator of the model. As usual, we obtain the weak formulation of model \eqref{EqModelCompact} by multiplying the equations by the test function \(v
\in\mathcal{C}_0^\infty(\Omega)\) and integrating over \(\Omega\) \textit{i.e.,}
\begin{equation*}
    \int_{\Omega} \frac{\partial u}{\partial t} v\, d\mathbf{x} = \int_{\Omega} \mathrm{E}(u) v\, d\mathbf{x}, \quad \forall v \in \mathcal{C}_0^\infty(\Omega).
\end{equation*}
Furthermore, for space discretisation of variables $u_{i}$ we use P2 basis functions $\{\psi_{\tau}(\cdot) \}_{\tau=0}^{l}$ corresponding to a rectangular grid of $l+1$ uniformly distributed nodes. Thus, on a square mesh, the solution components are approximated by
\begin{equation}
    \tilde{u}_i(\mathbf{x}, t) = \sum_\tau c^{u_i}_\tau(t)\, \psi_\tau(\mathbf{x}).
    \label{Eq:space-discetization}
\end{equation}
Finally, we discretize equations \eqref{EqModelCompact} in time using the splitted backward Euler given as 
\begin{equation}
    \int_{\Omega} \frac{u_{i}^{N+1} - u_{i}^N}{\Delta t} v\, d\mathbf{x} = \int_{\Omega} \E_{u_{i}}(u_{1}^{N},\dots, u_{i-1}^{N}, u_{i}^{N+1}, u_{i+1}^{N},\dots, u_{4}^{N}) v\, d\mathbf{x}.
    \label{Eq:time-discetization}
\end{equation}
To obtain the fully discretized system, we project onto the basis \(\psi_{j}\) and apply the time-marching scheme, which results into a (non-)linear system that has to be solved at each time step. This is stated in full detail in Appendix~\ref{appendix-full-discretization}.

\section{Condition number of the system}
\label{Sec:condSyst}
In this section, we estimate the condition number of the spatio-temporal discretized version of model \eqref{EqModel} at time \(N+1\) (i.e., system \eqref{Eq:time-discetization}) by first estimating the condition number of each of the constituting equations (\textit{i.e}., for variables \(g, f, m\) and \(e\){)}. 
For this, we employ the Rayleigh quotient to estimate the maximum and minimum eigenvalues (whose ratio is the condition number; see Section~\ref{sec:cond-1}), since the singular values coincide with the spectrum of the resulting (symmetric positive-definite) matrix. Note that, for simplicity, throughout this section we denote $u(\mathbf{x})$ as $u$, and $\psi_{_{(\cdot)}}(\mathbf{x})$ as $\psi_{_{(\cdot)}}$.  Also, from now and till the end of the study, we shall use the symbol $u$ for the finite element approximation at time $t_{N+1}, \: u=\{g, f, m ,e\}$ and consequentially, $\mathbf{c}^{u}\in\mathbb{R}^{l}$ for the approximate nodal values of $u$.

\paragraph{Condition number for the $g$-equation.} 
To estimate the condition number of the $g$-equation (i.e., equation \eqref{EqModel;a}), we derive the weak form of this equation by discretizing in time using backward Euler, multiplying by test function $v_{g}$, and integrating over \(\Omega\) (while also applying integration by parts and imposing the boundary condition in~\eqref{EqNonlocalFlux:ExisUniq}) to obtain
\begin{align}
    \left(\frac{1}{\Delta t}+\lambda_{_{g}}\right)\int\limits_{\Omega}\{g\,v_{_{g}} + D_{_{g}} \spt{\nabla g,\nabla v_{_{g}}}\}\text{d}\mathbf{x}=\int\limits_{\Omega}\{\frac{1}{\Delta t}g^{N} +  p_{_{g}}^{f} f^{N} + p_{_{g}}^{m}m^{ N}\}v_{_{g}}\text{d}\mathbf{x}
    \quad \forall\; v_{_{g}}.
    \label{Eq:weakform-gf}
\end{align}
We proceed to further discretize in space using finite element methods and write the above equation in the form
\begin{align*}
    \mathbf{A}_{_{g}} \mathbf{c}^{g} = \mathbf{F}_{g}
\end{align*}
and then proceed to estimate the condition number $k(\mathbf{A}_{_{g}})$ for the matrix $\mathbf{A}_{_{g}}$. After spatial discretization, \eqref{Eq:weakform-gf} recasts as
\begin{align*}
        \sum_{\tau=1}^{l}&\Bigg(\left(\frac{1}{\Delta t}+\lambda_{_{g}}\right)\int\limits_{\Omega}\psi_{\tau}\psi_{o}\textbf{d}\mathbf{x} + D_{_{g}} \int_{\Omega}\spt{\nabla \psi_\tau,\nabla \psi_{o}}\,\text{d}\mathbf{x}\Bigg)\textbf{c}^{g}\\
        &= \sum_{\tau=1}^{l}\left(\frac{1}{\Delta t}{c}^{g, N} +  p_{_{g}}^f{c}^{f, N} + p_{_{g}}^m{c}^{m, N}\right)\int\limits_{\Omega}\psi_{\tau}\psi_{o}\textbf{d}\mathbf{x}
    \quad \forall\; o\in \{1, \ldots, l\},
\end{align*}
and in a compact form as
\begin{equation*}
    \left(\left(\frac{1}{\Delta t}+\lambda_{_{g}}\right)\textbf{M} + D_{_{g}}\textbf{K}\right)\textbf{c}^{g} = \,\textbf{M}\left(\frac{1}{\Delta t}\textbf{c}^{g, N} +  p_{_{g}}^f \textbf{c}^{f, N} + p_{_{g}}^m\textbf{c}^{m, N}\right),
\label{eq:g_r1}
\end{equation*}
where 
\begin{align*}
    \textbf{A}_{_{g}} := \left(\frac{1}{\Delta t}+\lambda_{_{g}}\right)\textbf{M} + D_{_{g}}\textbf{K},\quad \text{and} \quad \mathbf{F}_{g} := \left(\frac{1}{\Delta t}\textbf{c}^{g, N} +  p_{_{g}}^f \textbf{c}^{f, N} + p_{_{g}}^m\textbf{c}^{m, N}\right)
\end{align*}
with $\textbf{M}$ the mass matrix, and $\textbf{K}$ the stiffness matrix. 
We recall that $g\approx\Tilde{g}^{}(x,t)=~\sum_{\tau = 1}^{l}c_{\tau}^{g}(t)\psi_{\tau}(x)$, 
then following the same argument as in Section 2.2 of~\cite{Johnson1987}, we derive the quadratic form of the classical mass matrix \textbf{M}:
\begin{align}
    \|\Tilde{g}\|^2 =\int_{\Omega} \spta {\Tilde{g}^{},\Tilde{g}} \,\,\text{d}\mathbf{x} &= \int_{\Omega} \spta{\sum_{\tau = 1}^{l}c_{\tau}^{g}\psi_{\tau}, \sum_{\tau = 1}^{l}c_{\tau}^{g}\psi_{\tau}} \,\, d\textbf{x} \nonumber\\
    &=\sum_{\tau = 1}^{l}c_{\tau}^{g}\int_{\Omega}\spt{\psi_{\tau}, \psi_{\tau}} \,d\textbf{x}\,\, c_{\tau}^{g}\nonumber\\
    &=\spt{\mathbf{M}\textbf{c}^{g}, \mathbf{c}^{g}}.
    \label{Eq:quadForm-MassMatrix}
\end{align}
We can follow the same argument for the classical stiffness matrix, $\textbf{K}$ \textit{i.e}., 
\begin{align}
   \|\nabla \Tilde{g}\|^2 =\int_{\Omega} \spta{\nabla \Tilde{g},\nabla \Tilde{g}} \,\,\text{d}\mathbf{x} &= \int_{\Omega} \spta{\sum_{\tau = 1}^{l}c_{\tau}^{g}\nabla \psi_{\tau}, \sum_{\tau = 1}^{l}c_{\tau}^{g}\nabla\psi_{\tau}} d\textbf{x} \nonumber\\
    &=\sum_{\tau = 1}^{l}c_{\tau}\int_{\Omega}\spt{\nabla \psi_{\tau},\nabla \psi_{\tau}}\,d\textbf{x}\,\, c_{\tau}^{g}\nonumber\\
    &=\spt{\mathbf{K}\mathbf{c}^{g},\textbf{c}^{g}}.
    \label{Eq:quadForm-StiffMatrix}
\end{align}
To proceed with obtaining the Rayleigh quotient, we assume that the family $\{T_{_{h}}\}$ of triangulations $T_{_{h}} = \{K\}$ satisfies the following condition~\cite{Johnson1987}:\\
\emph{There exist $\eta_{1}, \eta_{_{2}}>0$ independent of $h:=\underset{K \in T_{_{h}}}{\max} h_{_{K}}$, such that
\begin{itemize}
\item $T_{_{h}}$ is quasi-uniform, \textit{i.e}.,
\begin{equation}
     h_{_{K}}\ge \eta_{1} h,
     \label{Eq:h_kBound}
\end{equation}
\item all elements $K$ in the triangulation are well-shaped (\textit{i.e}., they are not too elongated or highly distorted)
\begin{equation} 
\frac{\varrho_{_{K}}}{h_{_{K}}}\ge \eta_{_{2}},
 \label{Eq:varrho_k-h_kBound}
\end{equation}
where $h_{_{K}}$ is the longest side of $K$ and $\varrho_{_{K}}$ is the diameter of the circle inscribed in $K$. 
\end{itemize}
}
Further, we define the space $V = \{g\in H^{^{1}}(\Omega): \spta{\nabla g , n }=0\}$ and we note that our approximations $\Tilde{g}$ are within this space. 
We use existing results~\cite{Johnson1987} on the bounds for the mass matrix and stiffness matrix in the lemma below.
\begin{lemma}
\label{lemma:mass-mat:stiff-mat}
    There exist constants $\zeta^{^{g}}_{1}$ and $\zeta^{^{g}}_{_{2}}$ depending on $\eta_{1}$ and $\eta_{_{2}}$ defined in \eqref{Eq:h_kBound} and \eqref{Eq:varrho_k-h_kBound}, respectively, such that
    \begin{align}
        \zeta^{^{g}}_{1}h^{^{2}}\nortwo{\mathbf{c}^{g}}^{^{2}}\le \|\Tilde{g}\|^{^{2}}_{L^{2}(\Omega)} \le  \zeta^{^{g}}_{_{2}}h^{^{2}}\|\mathbf{c}^{g}\|^{^{2}}_{_{2}}\label{InEq:mm_bound-g}\\
       \|\nabla \Tilde{g}\|^{^{2}}_{L^{2}(\Omega)}\equiv \int_{\Omega} \nortwo{\nabla \Tilde{g}}^{^{2}}\text{d}\mathbf{x}\le \zeta^{^{g}}_{_{2}}h^{^{-2}}\|\Tilde{g}\|^{^{2}}_{L^{2}(\Omega)}\label{InEq:sm_bound-g},
    \end{align}
    where $\nortwo{\cdot}$  is the usual Euclidean norm.
\end{lemma}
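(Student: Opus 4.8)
The plan is to establish both inequalities by the classical reference-element scaling technique, reducing every estimate to a fixed, $h$-independent bound on a single reference triangle $\hat{K}$ and then transferring it elementwise to the physical mesh. For each $K \in T_{h}$ let $F_{K}(\hat{\mathbf{x}}) = B_{K}\hat{\mathbf{x}} + \mathbf{b}_{K}$ be the affine map with $F_{K}(\hat{K}) = K$, and for a function $v$ on $K$ set $\hat{v} := v \circ F_{K}$. The change of variables gives $\int_{K} v^{2}\,\text{d}\mathbf{x} = |\det B_{K}|\int_{\hat{K}} \hat{v}^{2}\,\text{d}\hat{\mathbf{x}}$ and $\int_{K}\nortwo{\nabla v}^{2}\,\text{d}\mathbf{x} = |\det B_{K}|\int_{\hat{K}}\nortwo{B_{K}^{-T}\hat{\nabla}\hat{v}}^{2}\,\text{d}\hat{\mathbf{x}}$. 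The shape-regularity and quasi-uniformity hypotheses \eqref{Eq:h_kBound}--\eqref{Eq:varrho_k-h_kBound} translate into the geometric bounds $c_{1}h_{K}^{2} \le |\det B_{K}| \le c_{2}h_{K}^{2}$ and $\|B_{K}^{-1}\| \le c_{3}h_{K}^{-1}$, with constants depending only on $\eta_{1},\eta_{2}$; these are the only places the mesh assumptions enter the argument.

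For the two-sided mass-matrix bound \eqref{InEq:mm_bound-g}, I would first note that on the fixed reference element the $P_{2}$ nodal map $\hat{\mathbf{c}}\mapsto\|\hat{v}\|_{L^{2}(\hat{K})}^{2}$ is a positive-definite quadratic form on a finite-dimensional space, hence there are constants $\hat{\zeta}_{1},\hat{\zeta}_{2}>0$ with $\hat{\zeta}_{1}\nortwo{\hat{\mathbf{c}}}^{2} \le \|\hat{v}\|_{L^{2}(\hat{K})}^{2} \le \hat{\zeta}_{2}\nortwo{\hat{\mathbf{c}}}^{2}$. Because the Lagrange nodal values are preserved by $F_{K}$, the local coefficient vector $\hat{\mathbf{c}}$ coincides with the corresponding subvector of $\mathbf{c}^{g}$, so multiplying through by $|\det B_{K}|\sim h_{K}^{2}$ yields the elementwise estimate $c_{1}\hat{\zeta}_{1}h_{K}^{2}\nortwo{\mathbf{c}^{g}|_{K}}^{2} \le \|\tilde{g}\|_{L^{2}(K)}^{2} \le c_{2}\hat{\zeta}_{2}h_{K}^{2}\nortwo{\mathbf{c}^{g}|_{K}}^{2}$. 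Summing over $K\in T_{h}$, replacing each $h_{K}$ by $h$ via \eqref{Eq:h_kBound} and $h_{K}\le h$, and using that shape regularity \eqref{Eq:varrho_k-h_kBound} forces each node to lie in a uniformly bounded number of elements (so that $\sum_{K}\nortwo{\mathbf{c}^{g}|_{K}}^{2}$ is comparable to $\nortwo{\mathbf{c}^{g}}^{2}$) then gives \eqref{InEq:mm_bound-g}, with $\zeta_{1}^{g},\zeta_{2}^{g}$ absorbing $\hat{\zeta}_{1},\hat{\zeta}_{2},\eta_{1},\eta_{2}$ and the valence constant; identity \eqref{Eq:quadForm-MassMatrix} reads this as the spectral bound $\zeta_{1}^{g}h^{2}\nortwo{\mathbf{c}^{g}}^{2}\le\spt{\mathbf{M}\mathbf{c}^{g},\mathbf{c}^{g}}\le\zeta_{2}^{g}h^{2}\nortwo{\mathbf{c}^{g}}^{2}$.

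For the inverse inequality \eqref{InEq:sm_bound-g}, I would start from the reference-element estimate $\|\hat{\nabla}\hat{v}\|_{L^{2}(\hat{K})}^{2}\le \hat{C}\|\hat{v}\|_{L^{2}(\hat{K})}^{2}$, again valid because all norms on the finite-dimensional $P_{2}$ space are equivalent. Transferring back with the gradient change of variables and bounding $\|B_{K}^{-1}\|^{2}\le c_{3}^{2}h_{K}^{-2}$ produces $\|\nabla\tilde{g}\|_{L^{2}(K)}^{2}\le \hat{C}c_{3}^{2}h_{K}^{-2}\|\tilde{g}\|_{L^{2}(K)}^{2}$, the determinant factors cancelling between the two norms. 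Here the summation is clean, since both $\|\nabla\tilde{g}\|_{L^{2}(\Omega)}^{2}$ and $\|\tilde{g}\|_{L^{2}(\Omega)}^{2}$ decompose honestly as sums over elements; using $h_{K}\ge\eta_{1}h$, so that $h_{K}^{-2}\le\eta_{1}^{-2}h^{-2}$, yields \eqref{InEq:sm_bound-g} with $\zeta_{2}^{g}$ enlarged if necessary, and via \eqref{Eq:quadForm-StiffMatrix} this is exactly $\spt{\mathbf{K}\mathbf{c}^{g},\mathbf{c}^{g}}\le\zeta_{2}^{g}h^{-2}\|\tilde{g}\|_{L^{2}(\Omega)}^{2}$.

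The routine work is the change-of-variables bookkeeping; the step deserving care is the geometric translation of \eqref{Eq:h_kBound}--\eqref{Eq:varrho_k-h_kBound} into the bounds on $|\det B_{K}|$ and $\|B_{K}^{-1}\|$. In particular the \emph{lower} bound $|\det B_{K}|\ge c_{1}h_{K}^{2}$ — essential for the left-hand inequality in \eqref{InEq:mm_bound-g} — relies on non-degeneracy of $K$ through the inscribed-circle condition \eqref{Eq:varrho_k-h_kBound}, and one must verify that every constant is genuinely independent of both $h$ and the particular element $K$, so that the final $\zeta_{1}^{g},\zeta_{2}^{g}$ depend only on $\eta_{1},\eta_{2}$ as asserted. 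Since the paper invokes these as standard results from~\cite{Johnson1987}, I would cite that source for the elementwise scaling lemmas rather than reprove them, devoting the explicit argument to the assembly and summation step where the mesh-dependent constants are tracked.
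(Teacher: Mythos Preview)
Your proposal is correct and follows the classical reference-element scaling argument that is precisely what the cited source~\cite{Johnson1987} contains; the paper's own proof is simply the one-line citation ``See~\cite{Johnson1987}'' with no further detail. In that sense you have supplied considerably more than the paper does, and your outline---equivalence of norms on the finite-dimensional reference space, transfer via $|\det B_{K}|\sim h_{K}^{2}$ and $\|B_{K}^{-1}\|\lesssim h_{K}^{-1}$, then summation using quasi-uniformity and bounded node valence---is exactly the standard route taken in Johnson's text (Section~7.7), so there is no substantive difference in approach to report.
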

\begin{proof}
   See~\cite{Johnson1987}.
\end{proof}
Since the desired matrix (\textit{i.e}., $\mathbf{A}_{_{g}}$) is a linear combination of the matrices $\mathbf{M}$ and $\mathbf{K}$, we obtain its quadratic form also by a linear combination of $\|\Tilde{g}\|^{^{2}}_{L^{2}(\Omega)}$ and $\|\nabla \Tilde{g}\|^{^{2}}_{L^{2}(\Omega)}$. First, we split \eqref{InEq:mm_bound-g} into upper and lower bounds and multiply both by $\left(\frac{1}{\Delta t}+\lambda_{_{g}}\right)$:
\begin{align}
    \left(\frac{1}{\Delta t}+\lambda_{_{g}}\right)\zeta^{^{g}}_{1}h^{^{2}}\|\mathbf{c}^{g}\|^{^{2}}_{_{2}}\label{Ineq:lowerbnd-massmatrix}&\le \left(\frac{1}{\Delta t}+\lambda_{_{g}}\right)\spt{\mathbf{M}\textbf{c}^{g}, \mathbf{c}^{g}},\\
    \left(\frac{1}{\Delta t}+\lambda_{_{g}}\right)\spt{\mathbf{M}\textbf{c}^{g},\mathbf{c}^{g}}&\le \left(\frac{1}{\Delta t}+\lambda_{_{g}}\right)\zeta^{^{g}}_{_{2}} h^{^{2}}\|\mathbf{c}^{g}\|^{^{2}}_{_{2}},\label{Ineq:upperbnd-massmatrix}
\end{align}
where we have used inequality~\eqref{InEq:mm_bound-g}  with the identity in \eqref{Eq:quadForm-MassMatrix}. We point out that the coercivity property does not hold for the bilinear form $\int_{\Omega}\spta{ \nabla \Tilde{g},\nabla \Tilde{g} } \text{d}\mathbf{x}$ in \(V\) for homogeneous Neumman boundary conditions, because $\nabla \Tilde{g}  = 0$ for $\Tilde{g} = $ constant functions. We have $\int_{\Omega}\spta{ \nabla \Tilde{g},\nabla \Tilde{g} } \text{d}\mathbf{x}\ge 0$, and hence multiplying this and \eqref{InEq:sm_bound-g} by $D_{_{g}}>0$ leads to the following inequalities
\begin{align}
0&\le D_{_{g}}\spt{\mathbf{K}\textbf{c}^{g},\mathbf{c}^{g}}\label{Ineq:lowerbnd-stiffmatrix},\\
    D_{_{g}}\spta{\mathbf{K}\textbf{c}^{g}, \mathbf{c}^{g}}&\le D_{_{g}}\zeta^{^{g}}_{_{2}}\|\mathbf{c}^{g}\|^{^{2}}_{_{2}}\label{Ineq:upperbnd-stiffmatrix}.
\end{align}
Adding \eqref{Ineq:lowerbnd-massmatrix} to \eqref{Ineq:lowerbnd-stiffmatrix}, and \eqref{Ineq:upperbnd-massmatrix} to \eqref{Ineq:upperbnd-stiffmatrix}, we obtain 
\begin{align}
 \left(\frac{1}{\Delta t}+\lambda_{_{g}}\right)\zeta^{^{g}}_{1} h^{^{2}}\|\mathbf{c}^{g}\|^{^{2}}_{_{2}}&\le  \spta{ \left(\left(\frac{1}{\Delta t}+\lambda_{_{g}}\right)\mathbf{M} +  D_{_{g}} \mathbf{K}\right)\mathbf{c}^{g}, \mathbf{c}^{g}}\label{Ineq:A_lowbnd},\\
  \spta{\left(\left(\frac{1}{\Delta t}+\lambda_{_{g}}\right)\mathbf{M} +  D_{_{g}} \mathbf{K}\right)\mathbf{c}^{g}, \mathbf{c}^{g}} &\le \left(\left(\frac{1}{\Delta t}+\lambda_{_{g}}\right)\zeta^{^{g}}_{_{2}}h^{^{2}} + D_{_{g}}\zeta^{^{g}}_{_{2}} \right)\|\mathbf{c}^{g}\|^{^{2}}_{_{2}}\label{Ineq:A_upbnd}.
\end{align}
Then, from \eqref{Ineq:A_lowbnd} and \eqref{Ineq:A_upbnd}, we obtain the following lower and upper bounds for the singular values of $\mathbf{A}_{_{g}}$, namely:
\begin{align*}
    &\frac{\spta{ \mathbf{A}_{{g}}\mathbf{c}^{g}, \mathbf{c}^{g}}}{\|\mathbf{c}^{g}\|^{^{2}}_{_{2}}} = \frac{\spta{ \left(\left(\frac{1}{\Delta t}+\lambda_{_{g}}\right)\mathbf{M} +  D_{_{g}} \mathbf{K}\right)\mathbf{c}^{g}, \mathbf{c}^{g}}}{\|\mathbf{c}^{g}\|^{^{2}}_{_{2}}}\ge\left(\frac{1}{\Delta t}+\lambda_{_{g}}\right)\zeta^{^{g}}_{1} h^{^{2}} ,\\
     &\frac{\spta{ \mathbf{A}_{{g}}\mathbf{c}^{g}, \mathbf{c}^{g}}}{\|\mathbf{c}^{g}\|^{^{2}}_{_{2}}} = \frac{\spta{ \left(\left(\frac{1}{\Delta t}+\lambda_{_{g}}\right)\mathbf{M} +  D_{_{g}} \mathbf{K}\right)\mathbf{c}^{g}, \mathbf{c}^{g}}}{\|\mathbf{c}^{g}\|^{^{2}}_{_{2}}}\le \left(\left(\frac{1}{\Delta t}+\lambda_{_{g}}\right)\zeta^{^{g}}_{_{2}} h^{^{2}} + D_{_{g}}\zeta^{^{g}}_{_{2}} \right),
\end{align*}
$\forall \; \mathbf{c}^{g}\in\mathbb{R}^{l}$. Since, $\mathbf{M}$ and $\mathbf{K}$ are symmetric and positive definite, then $\mathbf{A}_{_{g}}$ is also symmetric and positive definite and hence the spectrum of  $\mathbf{A}_{_{g}}$ and its singular values coincide. Therefore, the condition number can now be expressed in terms of the eigenvalues \textit{i.e}., $\frac{\Lambda_{\max}}{\Lambda_{\text{min}}}$. This shows that there exists constants $\zeta^{^{g}}_{1}$ and $\zeta^{^{g}}_{_{2}}$ such that the condition number is given by
\begin{equation}
    {k}(\mathbf{A}_{{g}}) = \frac{\Lambda_{\max}}{\Lambda_{\text{min}}}\le \frac{\left(\frac{1}{\Delta t}+\lambda_{_{g}}\right)\zeta^{^{g}}_{1}h^{^{2}} +D_{_{g}}\zeta^{^{g}}_{_{2}}}{\left(\frac{1}{\Delta t}+\lambda_{_{g}}\right)\zeta^{^{g}}_{_{2}}h^{^{2}}} = 1 +\frac{D_{_{g}}\zeta^{^{g}}_{_{2}}}{\left(\frac{1}{\Delta t}+\lambda_{_{g}}\right)\zeta^{^{g}}_{_{2}}} h^{^{-2}}.
    \label{ConditionNumber_g}
\end{equation}
\paragraph{Condition number for the $e$-equation.} For the equation corresponding to the ECM dynamics, we discretize it using backward Euler, multiply it by a test function $v_{e}$ and integrate over \(\Omega\) to obtain the weak form
\begin{align}
    \int_{\Omega}\! \bigg[  e\!\left(\!\frac{1}{\Delta t} + \alpha_{_{f}}f^{N}\!\!+ \alpha_{_{m}}m^{N}\! \!+ \!\alpha_{_{e}}\!\!\right) 
    - p_{_{e}}f^{N}\!e\!\left(\!1 \!-\! w_{_{g}}g^{N}\!-\! w_{_{f}} f^{N}\!-\! w_{_{m}} m^{N}\!-\! w_{_{e}} e\right)\!\! \bigg] v_{_{e}} \text{d}\textbf{x}\nonumber\\
    = \int_{\Omega}\left(e_{_{c}}+ \frac{e^{N}}{\Delta t}\right)v_{_{e}}\,\text{d}\textbf{x}.
    \label{Eq:ECM_restated}
\end{align}
We see that the second term of the integrand on the left-hand-side (which we denote as $T^{e}_{_{2}}$) is non-linear in the unknown $e$. For that reason, we linearize it with respect to the unknown values $e$ evaluated at the known (\textit{i.e}., at time $N$) values of the other components $g, f,$ and $m$: 
\begin{equation}
    \frac{d T^{e}_{_{2}}}{d e} = 2 p_{_{e}}f^{N}w_{_{e}} e - p_{_{e}}f^{N}\left(1 \!-\! w_{_{g}}g^{N}\!-\! w_{_{f}} f^{N}\!-\! w_{_{m}} m^{N}\right).
    \label{Eq:linearizing_logistic_e}
\end{equation}
Now replacing this second term ($T^{e}_{_{2}}$) in the left-hand-side of \eqref{Eq:ECM_restated} with its linear approximation given by the derivative in \eqref{Eq:linearizing_logistic_e}, we get
\begin{align*}
    \int_{\Omega}\!e\!\left(\!\!\frac{1}{\Delta t}\! + \!\alpha_{_{f}}f^{N}\!\!\!+ \alpha_{_{m}}m^{N} \!\!+\! \alpha_{_{e}}\!\! +  2 p_{_{e}}f^{N}w_{_{e}}\!\!\right)\!v_{_{e}}\text{d}\textbf{x}
    = \!\int_{\Omega}\!\left(\!\!\left(e_{_{c}}\!+ \!\frac{e^{N}}{\Delta t}\!\!\right)\!\!+ \!p_{_{e}}f^{N}\!\!\left(\!1 \!-\! w_{_{g}}g^{N}\!\!-\! w_{_{f}} f^{N}\!\!-\! w_{_{m}} m^{N}\!\right)\!\!\right)\!v_{_{e}}\text{d}\textbf{x},
\end{align*}
which after discretizing in space becomes
\begin{align*}
     &\sum_{\tau = 1}^{l}\int_{\Omega}c_{\tau}^{^{e}}\psi_{\tau}\left(\frac{1}{\Delta t} + \sum_{q = 1}^{l}\psi_{q}\left(\left(\alpha_{_{f}} +  2 p_{_{e}}w_{_{e}}\right)c_{q}^{f,N}+ \alpha_{_{m}}c_{q}^{m, N} \right)+ \alpha_{_{e}}\right)\psi_{o}\text{d}\textbf{x} \nonumber\\
     &= \int\limits_{\Omega}\!\sum_{\tau = 1}^{l} \!\Bigg[ p_{_{e}} c_{\tau}^{f, N}\psi_{\tau}\!\! \left(\!\!1 -\! \sum_{s = 1}^{l}\psi_{s} \!\!\left(w_{_{g}}c_{s}^{g,N}\!\! +w_{_{f}}c_{s}^{f,N} \!\!+w_{_{m}}c_{s}^{m,N} \right)\!\!\right)\!\! + e_{_{c}} \!+\!\frac{1}{\Delta t} c_{\tau}^{e,N}\psi_{\tau}\!\Bigg]\psi_{o}\text{d}\textbf{x}.
\end{align*}
This is already expressed in the general form 
$\mathbf{A}_{e} \mathbf{c}^{e} = \mathbf{F}_{e}$, where \( \mathbf{A}_{e}\in\mathbb{R}^{l\times l} \) with entries
\begin{equation*}
    \left(\mathbf{A}_{e}\right)_{\tau o} = \int_{\Omega}\psi_{\tau}\psi_{o}\left(\frac{1}{\Delta t} + \sum_{q = 1}^{l}\psi_{q}\left(\left(\alpha_{_{f}} +  2 p_{_{e}}w_{_{e}}\right)c_{q}^{f,N}+ \alpha_{_{m}}c_{q}^{m, N} \right)+ \alpha_{_{e}}\right)\text{d}\textbf{x},
\end{equation*}
and
\begin{align*}
    \mathbf{F}_{e} = \int\limits_{\Omega}\!\sum_{\tau = 1}^{l} \!\Bigg[ p_{_{e}} c_{\tau}^{f, N}\psi_{\tau}\!\! \left(\!\!1 -\! \sum_{s = 1}^{l}\psi_{s} \!\!\left(w_{_{g}}c_{s}^{g,N}\!\! +w_{_{f}}c_{s}^{f,N} \!\!+w_{_{m}}c_{s}^{m,N} \right)\!\!\right)\!\! + e_{_{c}} \!+\!\frac{1}{\Delta t} c_{\tau}^{e,N}\psi_{\tau}\!\Bigg]\psi_{o}\text{d}\textbf{x},\\
    \,\,\forall o\in \{1,\ldots, l\}.
\end{align*}
Note that we can rewrite the entries of \(\mathbf{A}_{e}\) as follows:
\begin{align*}
    (\mathbf{A}_{e})_{_{\tau o}} &= \int_{\Omega}\psi_{\tau} \psi_{o}\left(\frac{1}{\Delta t} + \sum_{q = 1}^{l}\psi_{q}\left(\left(\alpha_{_{f}} +  2 p_{_{e}}w_{_{e}}\right)c_{q}^{f,N}+ \alpha_{_{m}}c_{q}^{m, N} \right)+ \alpha_{_{e}}\right)\text{d}\textbf{x}\nonumber\\
    & = \left(\frac{1}{\Delta t} +\alpha_{_{e}}\right)\!\!\!\int_{\Omega}\psi_{\tau}\psi_{o} \text{d}\textbf{x} + \sum_{q = 1}^{l} \left(\left(\alpha_{_{f}} +  2 p_{_{e}}w_{_{e}}\right)c_{q}^{f,N}+ \alpha_{_{m}}c_{q}^{m, N} \right)\int_{\Omega}\psi_{\tau}\psi_{q}\psi_{o} \text{d}\textbf{x}.
\end{align*}
This means that the matrix \( \mathbf{A}_{e}\in \mathbb{R}^{^{l\times l}}\) can now be expressed as
\begin{align}
    \mathbf{A}_{e} = \left(\frac{1}{\Delta t} +\alpha_{_{e}}\right)\mathbf{M} + \left(\alpha_{_{f}} +  2 p_{_{e}}w_{_{e}}\right)\mathbf{T}\left(c^{f, N}\right)+ \alpha_{_{m}}\mathbf{T}\left(c^{m, N}\right),
     \label{Eq:leftMatrixECM}
\end{align}
where \(\mathbf{M} = 
    \begin{bmatrix}
        \int\limits_{\Omega}  \psi_{1} \psi_{1}d\mathbf{x} &\ldots & \int\limits_{\Omega} \psi_{1} \psi_{l}d\mathbf{x}\\
        \vdots & & \vdots\\
         \int_{\Omega} \psi_{l} \psi_{1}d\mathbf{x} &\ldots & \int\limits_{\Omega} \psi_{l} \psi_{l}d\mathbf{x}
    \end{bmatrix}\), and \\[2.0ex]
    \\
$\mathbf{T}\left(c^{i, N}\right) =  \begin{bmatrix}
        \sum_{q}^{l} c_{q}^{i, N} \int\limits_{\Omega} \psi_{q} \psi_{1} \psi_{1}d\mathbf{x} &\ldots &  \sum_{q}^{l} c_{q}^{i, N} \int\limits_{\Omega} \psi_{q} \psi_{1} \psi_{l}d\mathbf{x}\\
        \vdots & & \vdots\\
          \sum_{q}^{l} c_{q}^{i, N} \int_{\Omega} \psi_{q} \psi_{l} \psi_{1}d\mathbf{x} &\ldots &  \sum_{q}^{l} c_{q}^{i, N} \int\limits_{\Omega} \psi_{q} \psi_{l} \psi_{l}d\mathbf{x}\end{bmatrix}, \quad i\in\{f, m\}$.\\[2.0ex] 

Since the terms \(\mathbf{T}_{c^{i, N}}, i\in\{f, m\}\) involve the integral of triple $\psi_{_{(\cdot)}}$, now we derive  the general quadratic form for this kind of terms. For any $\Tilde{e} =\sum_{\tau}^{l}c_{\tau}^{\Tilde{e}}\psi_{\tau} $ and an arbitrary known solution  \(\Tilde{u}^{N} =~\sum_{\tau}^{l}c_{\tau}^{\Tilde{u}, N}\psi_{\tau}\), \(\Tilde{u}\in\{\Tilde{g}, \Tilde{f}, \Tilde{m}, \Tilde{e}\}\) we have
{\footnotesize
\begin{align}
    &\int_{\Omega} \spta{\spta{ \Tilde{e}, \Tilde{e} }, \Tilde{u}^{N}}\; \text{d} \mathbf{x} 
     = \sptb{
    \begin{bmatrix}
        \sptb{
        \begin{bmatrix}
           \int\limits_{\Omega} \psi_{1} \psi_{1} \psi_{1}d\mathbf{x} &\ldots&  \int\limits_{\Omega} \psi_{1} \psi_{1} \psi_{l}d\mathbf{x}\nonumber\\
           \vdots & & \vdots\nonumber\\
           \int\limits_{\Omega} \psi_{1} \psi_{l} \psi_{1}d\mathbf{x} &\ldots & \int\limits_{\Omega} \psi_{1} \psi_{l} \psi_{l}d\mathbf{x}
       \end{bmatrix}
       \mathbf{c}^{e},\mathbf{c}^{e}}\nonumber\\
       \vdots\\
       \sptb{
       \begin{bmatrix}
           \int\limits_{\Omega} \psi_{l} \psi_{1} \psi_{1}d\mathbf{x} &\ldots&  \int\limits_{\Omega} \psi_{l} \psi_{1} \psi_{l}d\mathbf{x}\\
           \vdots & & \vdots\nonumber\\
           \int\limits_{\Omega} \psi_{l} \psi_{l} \psi_{1}d\mathbf{x} &\ldots & \int\limits_{\Omega} \psi_{l} \psi_{l} \psi_{l}d\mathbf{x}
       \end{bmatrix}
        \mathbf{c}^{e},\mathbf{c}^{e}} 
    \end{bmatrix}, \mathbf{c}^{u, N} }\nonumber\\
    &= \sptb{
    \begin{bmatrix}
         \int\limits_{\Omega} \psi_{1} \psi_{1} \psi_{1}d\mathbf{x} &\ldots&  \int\limits_{\Omega} \psi_{1} \psi_{1} \psi_{l}d\mathbf{x}\nonumber\\
         \vdots & & \vdots\nonumber\\
         \int\limits_{\Omega} \psi_{1} \psi_{l} \psi_{1}d\mathbf{x} &\ldots & \int\limits_{\Omega} \psi_{1} \psi_{l} \psi_{l}d\mathbf{x}
    \end{bmatrix}
    \mathbf{c}^{e},\mathbf{c}^{e}} c_{1}^{\Tilde{u},N} +\ldots +\sptb{
    \begin{bmatrix}
        \int\limits_{\Omega} \psi_{l} \psi_{1} \psi_{1}d\mathbf{x} &\ldots & \int\limits_{\Omega} \psi_{l} \psi_{1} \psi_{l}d\mathbf{x}\nonumber\\
        \vdots & & \vdots\nonumber\\
        \int\limits_{\Omega} \psi_{l} \psi_{l} \psi_{1}d\mathbf{x} &\ldots & \int\limits_{\Omega} \psi_{l} \psi_{l} \psi_{l}d\mathbf{x}
    \end{bmatrix}\mathbf{c}^{e},\mathbf{c}^{e}} c_{l}^{\Tilde{u},N}  \nonumber\\[1.0ex]
    & =\sptb{
    \begin{bmatrix}
        \sum_{q}^{l} c_{q}^{u, N} \int\limits_{\Omega} \psi_{q} \psi_{1} \psi_{1}d\mathbf{x} &\ldots &  \sum_{q}^{l} c_{q}^{u, N} \int\limits_{\Omega} \psi_{q} \psi_{1} \psi_{l}d\mathbf{x}\nonumber\\
        \vdots & & \vdots\nonumber\\
        \sum_{q}^{l} c_{q}^{u, N} \int_{\Omega} \psi_{q} \psi_{l} \psi_{1}d\mathbf{x} &\ldots &  \sum_{q}^{l} c_{q}^{u, N} \int\limits_{\Omega} \psi_{q} \psi_{l} \psi_{l}d\mathbf{x}
    \end{bmatrix}\mathbf{c}^{e},\mathbf{c}^{e}}\nonumber\\
    & =\spta{ \mathbf{T}\left(c^{\Tilde{u}, N}\right)\mathbf{c}^{e},\mathbf{c}^{e} } 
    \label{Eq:quadForm-Tc-ECM}
\end{align}}
Using the idea from the proof of \eqref{InEq:mm_bound-g} in Section 7.7 of \cite{Johnson1987}, it follows that we can use the same argument to bound $\int_{\Omega} \spta{\spta{ \Tilde{e}, \Tilde{e} }, \Tilde{u}^{N}}\; \text{d} \mathbf{x} = \spta{ \mathbf{T}\left(c^{\Tilde{u}, N}\right)\mathbf{c}^{e},\mathbf{c}^{e} }$ \textit{i.e}.,

\begin{lemma}
\label{Lemma:doubleECMKnownSol}
    For all \(\Tilde{e} = \sum_{\tau = 1}^{l}c^{\Tilde{e}}_{\tau} \psi_{\tau},\; \Tilde{u}^{N} = \sum_{\tau = 1}^{l}c^{\Tilde{u}, N}_{\tau} \psi_{\tau}\) and there exists constant $\zeta^{^{e}}_{1}$ and $\zeta^{^{e}}_{_{2}}$ depending on $\eta_{1}$ $\eta_{_{2}}$ defined in \eqref{Eq:h_kBound} and \eqref{Eq:varrho_k-h_kBound}, respectively and independent of the $h$ such that:
     \begin{equation}
          \zeta^{^{e}}_{1} h^{^{2}} \left\|\mathbf{c}^{\Tilde{u}, N}\right\|^{^{2}}_{_{2}}\|\mathbf{c}^{e}\|^{^{2}}_{_{2}}\le \int_{\Omega} \spta{\spta{ \Tilde{e}, \Tilde{e} }, \Tilde{u}^{N}}\; \text{d} \mathbf{x} \le \zeta^{^{e}}_{_{2}} h^{^{2}}\left\|\mathbf{c}^{\Tilde{u}, N} \right\|^{^{2}}_{_{2}}\|\mathbf{c}^{e}\|^{^{2}}_{_{2}}.
           \label{Eq:lemma_tripleproduct}
     \end{equation}
\end{lemma}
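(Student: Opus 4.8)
The plan is to prove the two-sided estimate \eqref{Eq:lemma_tripleproduct} by transporting the element-wise scaling argument behind the mass-matrix bound \eqref{InEq:mm_bound-g} (Johnson, \S7.7) from a bilinear to a trilinear setting. Thanks to the identity \eqref{Eq:quadForm-Tc-ECM}, the quantity to control is $\int_\Omega (\tilde{e})^2\,\tilde{u}^N\,\mathrm{d}\mathbf{x} = \spta{\mathbf{T}(c^{\tilde{u},N})\mathbf{c}^{e},\mathbf{c}^{e}}$, i.e.\ a weighted $L^2$-type form with weight $\tilde{u}^N$; the goal is to sandwich it between multiples of $h^{2}\nortwo{\mathbf{c}^{\tilde{u},N}}^{2}\nortwo{\mathbf{c}^{e}}^{2}$ with $h$-independent constants.

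First I would localise, writing $\int_\Omega (\tilde{e})^2\tilde{u}^N\,\mathrm{d}\mathbf{x} = \sum_{K\in T_h}\int_K (\tilde{e})^2\tilde{u}^N\,\mathrm{d}\mathbf{x}$, and pull each element integral back to the reference element $\hat{K}$ through the affine map. The Jacobian contributes a factor comparable to $h_K^{2}$, where the shape-regularity condition \eqref{Eq:varrho_k-h_kBound} keeps the map's distortion (hence these constants) uniform, and on $\hat{K}$ the restrictions $\tilde{e}|_K,\tilde{u}^N|_K$ become fixed-degree P2 polynomials determined by their local nodal coefficients. Because all norms on this finite-dimensional polynomial space are equivalent, $\int_{\hat K}(\hat{e})^2\hat{u}^N\,\mathrm{d}\hat{\mathbf{x}}$ is controlled by the product of the local coefficient norms, giving an element-level bound of the shape $C\,h_K^{2}\,\nortwo{\mathbf{c}^{\tilde{u},N}_K}\,\nortwo{\mathbf{c}^{e}_K}^{2}$. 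I would then reassemble: quasi-uniformity \eqref{Eq:h_kBound} replaces each $h_K$ by $h$, while the bounded overlap of the P2 supports lets me pass from $\sum_K\nortwo{\mathbf{c}^{e}_K}^2$ to $\nortwo{\mathbf{c}^{e}}^2$ (and likewise for $\mathbf{c}^{\tilde{u},N}$), all with constants depending only on $\eta_1,\eta_2$.

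The step I expect to be delicate is the lower bound, for two reasons. The integrand $(\tilde{e})^2\tilde{u}^N$ is sign-definite only when $\tilde{u}^N\ge 0$; although this matches the nonnegativity of the nodal values enforced by the model, the form still vanishes on any element where $\tilde{u}^N\equiv 0$ but $\tilde{e}\not\equiv 0$, so a genuine lower bound seems to demand a uniform positive floor $\tilde{u}^N\ge u_{\min}>0$ rather than mere nonnegativity. Moreover, the natural scaling makes $\nortwo{\mathbf{c}^{\tilde{u},N}}$ enter to the first power — the clean upper estimate is the chain $\int_\Omega (\tilde{e})^2\tilde{u}^N\,\mathrm{d}\mathbf{x}\le \|\tilde{u}^N\|_{L^{\infty}(\Omega)}\|\tilde{e}\|_{L^2(\Omega)}^2$ combined with the inverse inequality $\|\tilde{u}^N\|_{L^{\infty}(\Omega)}\le C\,h^{-1}\|\tilde{u}^N\|_{L^2(\Omega)}$ and \eqref{InEq:mm_bound-g}. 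I would therefore first confirm whether the squared power $\nortwo{\mathbf{c}^{\tilde{u},N}}^{2}$ in \eqref{Eq:lemma_tripleproduct} is intended or a normalisation artifact; the element-wise machinery above is the route I would follow in either case, adjusting the exponent and the positivity hypothesis accordingly.
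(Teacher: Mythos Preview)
Your strategy---localise to elements, pull back to the reference element to extract the $h^{2}$ Jacobian factor, use norm equivalence on the finite-dimensional reference polynomial space, then reassemble via quasi-uniformity and bounded overlap---is precisely the route the paper takes. The paper's proof is in fact terser than your outline: it writes the single-element change of variables
\[
\int_{K} (\tilde e)^{2}\,\tilde u^{N}\,\mathrm{d}\mathbf{x}
= h^{2}\int_{\hat K}\Bigl|\sum_{\tau} c^{u,N}_{\tau}\hat\psi_{\tau}\Bigr|\,\Bigl|\sum_{\tau} c_{\tau}\hat\psi_{\tau}\Bigr|^{2}\,\mathrm{d}\hat{\mathbf{x}},
\]
notes that $|\det J|=h^{2}$, and then says ``the rest of the proof follows the same argument from~\cite{Johnson1987}.'' No further detail is given.

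Your reservations in the final paragraph are well placed and go \emph{beyond} what the paper supplies. The trilinear form is linear in $\tilde u^{N}$, so homogeneity forces $\nortwo{\mathbf{c}^{\tilde u,N}}$ to enter to the first power, not squared; and the lower bound genuinely fails when $\tilde u^{N}$ and $\tilde e$ have disjoint supports or when $\tilde u^{N}$ takes negative values on part of the mesh. The paper's proof does not address either issue---it simply defers to Johnson after the scaling step---so you have followed the same method but been more careful about its scope. For the purposes of the downstream condition-number estimates the paper only ever invokes this lemma with nonnegative previous-step data, which may be the implicit assumption, but the exponent on $\nortwo{\mathbf{c}^{\tilde u,N}}$ in the stated inequality does appear to be a slip.
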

\begin{proof}
    Here we focus on an arbitrary element K and transform to a reference element $\Hat{K}$. Let the transformation from a reference element to a physical element be given as the mapping $F:\Hat{K}\mapsto K$, and $x = F(\Hat{x})$. For $\Hat{x}\in P_{1}(\Hat{K})$, $\Hat{e}(\Hat{x}) = \sum_{\tau}c_{\tau}^{\hat{e}}\Hat{\psi}_{\tau}(\Hat{x})$ and $x = (x_{1}, x_{_{2}}) = (h \Hat{x}_{1}, h \Hat{x}_{_{2}}) = h\Hat{x}$. Then the following holds on a single element $K$: 
    \begin{align*}
        \int_{\Omega} \spta{\spta{ \Tilde{e}, \Tilde{e} }, \Tilde{u}^{N}}\; \text{d} \mathbf{x} &= \int_{\Hat{K}}|\sum_{\tau} c^{u,N}_{\tau}\Hat{\psi}_{\tau}||\sum_{\tau} c_{\tau}\Hat{\psi}_{\tau}|^{^{2}}|\text{det}(J)|\text{d}\Hat{\textbf{x}}\nonumber \\
        &=h^{^{2}}\int_{\Hat{K}}|\sum_{\tau} c^{u,N}_{\tau}\Hat{\psi}_{\tau}||\sum_{\tau} c_{\tau}\Hat{\psi}_{\tau}|^{^{2}}\text{d}\Hat{\textbf{x}},
    \end{align*}
    where $|\text{det}(J)| = h^{^{2}}$ is the determinant of the Jacobian of transformation. Note that we only have to sum over all element K to get the whole discretized domain $\Omega$. The rest of the proof follows the same argument from~\cite{Johnson1987}.
\end{proof}
To estimate the condition number $k(\mathbf{A}_{e})$, first we find the quadratic form of each of the terms that constitute $\mathbf{A}_{e}$ (see \eqref{Eq:leftMatrixECM}) and then use their Rayleigh quotients together with \eqref{Eq:quadForm-MassMatrix} and \eqref{Eq:lemma_tripleproduct} to estimate the upper and lower bounds. For the first term, we have that 
\begin{equation}
   \left(\frac{1}{\Delta t} +\alpha_{_{e}}
    \right)\spta{\mathbf{Mc}^{e}, \mathbf{c}^{e}} \le \left(\frac{1}{\Delta t} +\alpha_{_{e}}
    \right)\zeta^{^{g}}_{1}h^{^{2}}\|\mathbf{c}^{e}\|_{_{2}}^{^{2}},
    \label{Ineq:upbnd_massmatrix_ECM}
\end{equation}
and 
\begin{equation}
  \left(\frac{1}{\Delta t} +\alpha_{_{e}}
    \right)\spta{\mathbf{Mc}^{e}, \mathbf{c}^{e}} \ge  \left(\frac{1}{\Delta t} +\alpha_{_{e}}
    \right) \zeta^{^{g}}_{_{2}}h^{^{2}}\|\mathbf{c}^{e}\|_{_{2}}^{^{2}},
    \label{Ineq:lowbnd_massmatrix_ECM}
\end{equation}
 since $\left(\frac{1}{\Delta t} +\alpha_{_{e}} \right)> 0$ and $\mathbf{M}$ is positive definite, where $\zeta^{^{g}}_{1}$ and $\zeta^{^{e}}_{_{2}}$ are from Lemma~\ref{lemma:mass-mat:stiff-mat}. For the second and third term we have
 \begin{align*}
      \mathbf{T}^{e}&:= \left(\alpha_{_{f}} +  2 p_{_{e}}w_{_{e}}\right)\mathbf{T}({c}^{f,N})+ \alpha_{_{m}}\mathbf{T}({c}^{m,N})\nonumber
 \end{align*}
and so
\begin{equation}
      \spta{\mathbf{T}^{e}\mathbf{c}^{e},\mathbf{c}^{e}} \le  \left(\left(\alpha_{_{f}} +  2 p_{_{e}}w_{_{e}}\right)\left\|\mathbf{c}^{f,N}\right\|^{^{2}}_{_{2}} + \alpha_{_{m}}\left\|\mathbf{c}^{m,N}\right\|^{^{2}}_{_{2}}\right) \zeta^{^{e}}_{1}h^{^{2}}\|\mathbf{c}^{e}\|^{^{2}}_{_{2}},
      \label{Ineq:upbnd_tripplematrix_ECM}
\end{equation}
\begin{equation}
     \spta{\mathbf{T}^{e}\mathbf{c}^{e},\mathbf{c}^{e}}\ge \left(\left(\alpha_{_{f}} +  2 p_{_{e}}w_{_{e}}\right)\left\|\mathbf{c}^{f,N}\right\|^{^{2}}_{_{2}} + \alpha_{_{m}}\left\|\mathbf{c}^{m,N}\right\|^{^{2}}_{_{2}}\right) \zeta^{^{e}}_{1}h^{^{2}}\|\mathbf{c}^{e}\|^{^{2}}_{_{2}}.
     \label{Ineq:lowbnd_tripplematrix_ECM}
 \end{equation}
Using \eqref{Ineq:upbnd_massmatrix_ECM} - \eqref{Ineq:lowbnd_tripplematrix_ECM}, we have:
\begin{align*}
    \Lambda_{_{\max}}(\mathbf{A}_{e}) &:=\underset{\mathbf{c}^{e}\neq 0}{\max}\frac{\spta{ \mathbf{M}^{e} \mathbf{c}^{e}, \mathbf{c}^{e} }}{\|\mathbf{c}^{e}\|^{^{2}}} + \underset{\mathbf{c}^{e}\neq 0}{\max}\frac{\spta{ \mathbf{T}^{e} \mathbf{c}^{e}, \mathbf{c}^{e} }}{\|\mathbf{c}^{e}\|^{^{2}}}\nonumber\\
    &\le  \left(\frac{1}{\Delta t} +\alpha_{_{e}}
    \right)\zeta^{^{g}}_{1} + \left(\left(\alpha_{_{f}} +  2 p_{_{e}}w_{_{e}}\right)\left\|\mathbf{c}^{f,N}\right\|^{^{2}}_{_{2}} + \alpha_{_{m}}\left\|\mathbf{c}^{m,N}\right\|^{^{2}}_{_{2}}\right)\zeta^{^{e}}_{1}h^{^{2}}
\end{align*}
and 
\begin{align*}
    \Lambda_{_{\text{min}}}(\mathbf{A}_{e}) &=\underset{\mathbf{c}\neq 0}{\text{min}}\frac{\spta{ \mathbf{M}^{^{e}} \mathbf{c}^{e}, \mathbf{c}^{e} }}{\|\mathbf{c}\|^{^{2}}} + \underset{\mathbf{c}\neq 0}{\text{min}}\frac{\spta{ \mathbf{T}^{^{e}}\mathbf{c}, \mathbf{c} }}{\|\mathbf{c}\|^{^{2}}}\nonumber\\
    &\ge  \left(\frac{1}{\Delta t} +\alpha_{_{e}}
    \right)\zeta^{^{g}}_{1} + \left(\left(\alpha_{_{f}} +  2 p_{_{e}}w_{_{e}}\right)\left\|\mathbf{c}^{f,N}\right\|^{^{2}}_{_{2}} + \alpha_{_{m}}\left\|\mathbf{c}^{m,N}\right\|^{^{2}}_{_{2}}\right)\zeta^{^{e}}_{1}h^{^{2}},
\end{align*}
and the condition number is given by
\begin{equation}
  k(\mathbf{A}_{e}) = \frac{\Lambda_{_{\max}}(\mathbf{A}_{e}) }{\Lambda_{_{\text{min}}}(\mathbf{A}_{e}) } \le\frac{\left(\frac{1}{\Delta t} +\alpha_{_{e}}
    \right)\zeta^{^{g}}_{1} + \left(\left(\alpha_{_{f}} +  2 p_{_{e}}w_{_{e}}\right)\left\|\mathbf{c}^{f,N}\right\|^{^{2}}_{_{2}} + \alpha_{_{m}}\left\|\mathbf{c}^{m,N}\right\|^{^{2}}_{_{2}}\right)\zeta^{^{e}}_{1}h^{^{2}}}{\left(\frac{1}{\Delta t} +\alpha_{_{e}}
    \right)\zeta^{^{g}}_{1} + \left(\left(\alpha_{_{f}} +  2 p_{_{e}}w_{_{e}}\right)\left\|\mathbf{c}^{f,N}\right\|^{^{2}}_{_{2}} + \alpha_{_{m}}\left\|\mathbf{c}^{m,N}\right\|^{^{2}}_{_{2}}\right)\zeta^{^{e}}_{1}h^{^{2}}}\le C.
\end{equation}

\paragraph{Condition number for the $f$-equation.}
For the corresponding fibroblast dynamic, we get the following after discretizing in time and integrating by parts
\begin{align}
    \int_{\Omega}\Bigg[\frac{f - f^{N}}{\Delta t} v_{_{f}} + D_{_{f}}\spta{ \nabla f, \nabla v_{_{f}}}   -\mu_{_{f}}\spta{ A_{f}^{N}[u],f \,\nabla  v_{_{f}}}  
    + \lambda_{_{f}} f v_{_{f}} \nonumber\\
    - p_{_{f}} g^{N} f\left(1-w_{_{g}} g^{N} - w_{_{f}} f - w_{_{m}} m^{N} - w_{_{e}} e^{N}\right) v_{_{f}}\Bigg]\text{d}\mathbf{x} = 0,
    \label{Eq:fib:timeDisc}        
\end{align}
where 
\begin{align*}
    A_{_{f}}^{N}[u](\mathbf{x}) = \frac{1}{R}{\int\limits_{\textbf{B}_{_{\|\cdot\|_{_{\infty}}}}\!\!(\textbf{0},R)}}\!\!\!\mathrm{K}(\|\mathbf{y}\|_{_{2}})  \mathbf{n}(\mathbf{y})\left(\!\!\left(1-\rho\left(u^{N}\right)\!\!\right)^{+}\,{\Gamma}_{_i}\left(u^{N}\right)\!\!\right)(\mathbf{x}+\mathbf{y},t)\, \text{d}\mathbf{y}.
\end{align*}
We see that the fifth term on the left-hand-side of equation \eqref{Eq:fib:timeDisc} (term which we denote as $T^{f}_{_{5}}$) is non-linear in the unknown $f$. For that reason, we linearize it with respect to the unknown values $f$ evaluated at the known (\textit{i.e}., at time $N$) values of the other components $g, m,$ and $e$: 
\begin{equation}
    \frac{d T_{_{5}}}{d f} = 2 p_{_{f}}g^{N}w_{_{f}} f - p_{_{f}}g^{N}\left(1 \!-\! w_{_{g}}g^{N}\!-\! w_{_{m}} m^{N}\!-\! w_{_{e}} e^{N}\right).
    \label{Eq:linearizing_logistic_f}
\end{equation}
Furthermore, we approximate some terms in the integrand as follows:
\begin{equation*}
    0\le \left(1-\rho\left(u^{N}(\mathbf{x}+\mathbf{y},t)\right)\right)^{^{\!\!+}} = \left(\!1-\!(w_{_{g}} g^{N}\!\!\! +\! w_{_{f}} f^{N}\!\!\! + \!w_{_{m}} m^{N}\!\!\! +\!w_{_{e}} e^{N})(\mathbf{x}+\mathbf{y},t)\!\right)^{^{\!\!+}} \le 1,
\end{equation*} 
and 
\begin{align*}
    {\Gamma}_{_f}\left(u^{N}\right)(\mathbf{x}+\mathbf{y},t) &= \left(\!\!\frac{e^{N}\!\! + \!g^{N}}{1\!+\!e^{N}\!\! + \!g^{N}}\!\!\right)\!\!\!\left(\!S_{_{ff}}^{\max} f^{N} \!\!+S_{_{fm}}^{\max} m^{N} \!\!+S_{_{fe}}^{\max} e^{N}\!\right)(\mathbf{x}+\mathbf{y},t)\\
    &\le S_{_{ff}}^{\max} f^{N}(\mathbf{x}+\mathbf{y},t) \!\!+S_{_{fm}}^{\max} m^{N}(\mathbf{x}+\mathbf{y},t) \!\!+S_{_{fe}}^{\max} e^{N}(\mathbf{x}+\mathbf{y},t),
\end{align*}
where we have used the fact that $\left(e^{N}\!\! + \!g^{{N}}\right)/\left(1\!+\!e^{{N}}\!\! + \!g^{{N}}\right)\le 1$. Approximating 
each $ j^{N}(\mathbf{x}+\mathbf{y},t), j\in\{f, m, e\}$ with its Taylor expansion and assuming higher order terms vanish we have
\begin{align*}
   j^{N}(\mathbf{x}+\mathbf{y},t) \approx j^{N}\!(\mathbf{x},t) + \spta{\! \nabla j^{N}\!(\mathbf{x},t), \,\mathbf{y}\!},\;\;\;j\in\{f, m, e\}.
\end{align*}
Substituting the unit normal defined in \eqref{surf:normal:def} and the kernel defined in \eqref{kernel:Gaussian} together with the above approximations, the non-local term $A_{_{f}}^{N}[u](\mathbf{x})$ can now be approximated by $\Tilde{A}_{_{f}}^{N}[u](\mathbf{x})$ that is given by
{\small
\begin{align}
    \Tilde{A}_{_{f}}^{N}[u](\mathbf{x}) =&  \frac{1}{R}\!\!{\int\limits_{\textbf{B}_{_{\|\cdot\|_{_{\infty}}}}\!\!(\textbf{0},R)}}\!\! \!\!\!\! \frac{\mathbf{y} e^{^{-{\|\mathbf{y}\|_{_{2}}^{^{2}}\big/2\sigma^{^{2}}}}} }{2\pi\sigma^{^{2}}}\,\!\!\!\Big[\left(\!S_{_{ff}}^{^{\max}}\left(f^{N}\!(\mathbf{x},t)\right. + \spt{\! \nabla f^{N}\!(\mathbf{x},t), \,\mathbf{y}\!}\right)+
    \left(\!S_{_{fm}}^{\max}\left(m^{N}\!(\mathbf{x},t) + \spt{\! \nabla m^{N}\!(\mathbf{x},t), \,\mathbf{y}\!}\right)\right.\nonumber\\
    &+\left(S_{_{fe}}^{\max} \left(e^{N}\!(\mathbf{x},t) + \spt{\!\nabla e^{N}\!(\mathbf{x},t), \,\mathbf{y}\!}\right)\!\right)\Big] \text{d}\mathbf{y} \nonumber\\
    =& \frac{1}{2\pi\sigma^{^{2}}R}\Bigg[(\!S_{_{ff}}^{^{\max}}f^{N}\!(\mathbf{x},t) + S_{_{fm}}^{^{\max}}m^{N}\!(\mathbf{x},t) + S_{_{fe}}^{^{\max}} e^{N}\!(\mathbf{x},t)) \!\!\!\!\!\!\int\limits_{\textbf{B}_{_{\|\cdot\|_{_{\infty}}}}\!\!(\textbf{0},R)} \!\!\!\!\!\!\mathbf{y} e^{^{-{\|\mathbf{y}\|_{_{2}}^{^{2}}\big/2\sigma^{^{2}}}}}\text{d}\mathbf{y}\nonumber\\
    & + \Bigg[S_{_{ff}}^{^{\max}}\spt{ \nabla f^{N}, \!\!\!\!\!\!\int\limits_{\textbf{B}_{_{\|\cdot\|_{_{\infty}}}}\!\!(\textbf{0},R)}\!\!\!\!\!\!e^{^{-{\|\mathbf{y}\|_{_{2}}^{^{2}}\big/2\sigma^{^{2}}}}}
    \begin{bmatrix}
        y_{1}^{2} \\
        y_{1} y_{2}
    \end{bmatrix}
    \text{d}\mathbf{y}} 
    +   S_{_{fm}}^{^{\max}}\spt{ \nabla m^{N}, \!\!\!\!\!\!\int\limits_{\textbf{B}_{_{\|\cdot\|_{_{\infty}}}}\!\!(\textbf{0},R)}\!\!\!\!\!\!e^{^{-{\|\mathbf{y}\|_{_{2}}^{^{2}}\big/2\sigma^{^{2}}}}}
    \begin{bmatrix}
        y_{1}^{2} \\
        y_{1} y_{2}
    \end{bmatrix}\text{d}\mathbf{y}} \nonumber\\
     &+   S_{_{fe}}^{^{\max}}\spt{ \nabla e^{N}, \!\!\!\!\!\!\int\limits_{\textbf{B}_{_{\|\cdot\|_{_{\infty}}}}\!\!(\textbf{0},R)}\!\!\!\!\!\!e^{^{-{\|\mathbf{y}\|_{_{2}}^{^{2}}\big/2\sigma^{^{2}}}}}
     \begin{bmatrix}
        y_{1}^{2} \\
        y_{1} y_{2}
    \end{bmatrix}\text{d}\mathbf{y}},\,\, 
     S_{_{ff}}^{^{\max}}\spt{ \nabla f^{N}, \!\!\!\!\!\!\int\limits_{\textbf{B}_{_{\|\cdot\|_{_{\infty}}}}\!\!(\textbf{0},R)}\!\!\!\!\!\!e^{^{-{\|\mathbf{y}\|_{_{2}}^{^{2}}\big/2\sigma^{^{2}}}}}
    \begin{bmatrix}
        y_{1} y_{2} \\
        y_{2}^{2}
    \end{bmatrix}
\text{d}\mathbf{y}} \nonumber\\
    &+  S_{_{fm}}^{^{\max}}\spt{ \nabla m^{N}, \!\!\!\!\!\!\int\limits_{\textbf{B}_{_{\|\cdot\|_{_{\infty}}}}\!\!(\textbf{0},R)}\!\!\!\!\!\!e^{^{-{\|\mathbf{y}\|_{_{2}}^{^{2}}\big/2\sigma^{^{2}}}}}
    \begin{bmatrix}
        y_{1} y_{2} \\
        y_{2}^{2}
    \end{bmatrix}\text{d}\mathbf{y}} 
     +   S_{_{fe}}^{^{\max}}\spt{ \nabla e^{N}, \!\!\!\!\!\!\int\limits_{\textbf{B}_{_{\|\cdot\|_{_{\infty}}}}\!\!(\textbf{0},R)}\!\!\!\!\!\!e^{^{-{\|\mathbf{y}\|_{_{2}}^{^{2}}\big/2\sigma^{^{2}}}}}
     \begin{bmatrix}
        y_{1} y_{2} \\
        y_{2}^{2}
    \end{bmatrix}\text{d}\mathbf{y}}
     \Bigg]
     \Bigg]\nonumber\\
     =& \frac{1}{2\pi\sigma^{^{2}}R}\Bigg[S_{_{ff}}^{^{\max}}\spt{ \nabla f^{N}, 
     \begin{bmatrix}
        C_{_{\rho,R}} \\
        0
    \end{bmatrix}} + S_{_{fm}}^{^{\max}}\spt{ \nabla m^{N}, 
    \begin{bmatrix}
        C_{_{\rho,R}} \\
        0
    \end{bmatrix}} S_{_{fe}}^{^{\max}}\spt{ \nabla e^{N}, 
    \begin{bmatrix}
        C_{_{\rho,R}} \\
        0
    \end{bmatrix}} , \nonumber\\
      &S_{_{ff}}^{^{\max}}\spt{ \nabla f^{N}, 
    \begin{bmatrix}
        0 \\
        C_{_{\rho,R}}
    \end{bmatrix}} + S_{_{fm}}^{^{\max}}\spt{ \nabla m^{N}, 
    \begin{bmatrix}
        0 \\
        C_{_{\rho,R}}
    \end{bmatrix}} S_{_{fe}}^{^{\max}}\spt{ \nabla e^{N}, 
    \begin{bmatrix}
        0 \\
        C_{_{\rho,R}}
    \end{bmatrix}}\Bigg] \nonumber\\
      =& \frac{C_{_{\rho, R}}}{2\pi \rho^{^{2}}R}\left(\!S_{_{ff}}^{^{\max}} \nabla f^{N}\!(\mathbf{x},t) + S_{_{fm}}^{^{\max}}\nabla m^{N}\!(\mathbf{x},t)+S_{_{fe}}^{^{\max}} \nabla e^{N}\!(\mathbf{x},t)\right),
       \label{Eq:nonlocal:Approx-f}
\end{align}}
where 
\[ 
C_{_{\sigma, R}} = \frac{-\sigma\,\text{ erf} \left(\frac{R\sqrt{2}}{2\sigma}\right)\left[\sqrt{2\pi} \,\,e^{-R^{^{2}}/2\sigma^{^{2}}} R - \pi\sigma \text{ erf}\left(\frac{R\sqrt{2}}{2\sigma}\right)\right]}{\pi},
\] 
since \[\int\limits_{\textbf{B}_{_{\|\cdot\|_{_{\infty}}}}\!\!(\textbf{0},R)}\!\!\!\!\!\!e^{^{-{\|\mathbf{y}\|_{_{2}}^{^{2}}\big/2\sigma^{^{2}}}}} y_{i} y_{j}\text{d}\mathbf{x} =
\begin{cases} C_{_{\sigma, R}}, & i=j\\
    0, &i\neq j
\end{cases}  
\quad i,j\in\{1,2\}
\].

Now that we are done with the various approximations, we substitute the approximation of \(T^{f}_{5}\) in~\eqref{Eq:linearizing_logistic_f} and the approximation of the non-local term $ A_{_{f}}^{N}[u](\mathbf{x})$ in~\eqref{Eq:nonlocal:Approx-f} into equation \eqref{Eq:fib:timeDisc} and integrate by parts to obtain

\begin{align}
    \int_{\Omega}\Bigg[\frac{f - f^{N}}{\Delta t} v_{_{f}} + D_{_{f}}\spta{ \nabla f, \nabla v_{_{f}}} 
     -\mu_{_{f}} \spta{ \Tilde{A}_{f}^{N}[u], f \nabla  v_{_{f}}} +\lambda_{_{f}} f v_{_{f}} +  2 p_{_{f}}g^{N}w_{_{f}} f\nonumber\\
     - p_{_{f}}g^{N}\left(1 \!-\! w_{_{g}}g^{N}\!-\! w_{_{m}} m^{N}\!-\! w_{_{e}} e^{N}\right) v_{_{f}}\Bigg]\text{d}\mathbf{x} = 0, \quad\forall v_{_{f}}.
     \label{Eq:weakForm-fib}
\end{align}
This further becomes
\begin{align*}
   &  \int_{\Omega}\Bigg[\frac{f - f^{N}}{\Delta t} v_{_{f}} + D_{_{f}}\spta{ \nabla f, \nabla v_{_{f}}} 
     -\frac{\mu_{_{f}} C_{_{\rho, R}}}{R}\left(S_{_{ff}}^{^{\max}}f  \spta{ \nabla f^{N},\nabla  v_{_{f}}  } \right.+
     S_{_{fm}}^{^{\max}}f  \spta{ \nabla m^{N},\nabla  v_{_{f}}  } \\
     &+S_{_{fe}}^{^{\max}} f  \left.\spta{ \nabla e^{N},\nabla  v_{_{f}}  } \right)   
      +\lambda_{_{f}} f v_{_{f}} +  2 p_{_{f}}g^{N}w_{_{f}} f - p_{_{f}}g^{N}\left(1 \!-\! w_{_{g}}g^{N}\!-\! w_{_{m}} m^{N}\!-\! w_{_{e}} e^{N}\right) v_{_{f}}\Bigg]\text{d}\mathbf{x} = 0, \quad\forall v_{_{f}},
\end{align*}
where we have used the boundary condition in~\eqref{EqBC:ExisUniq}, which reduces simply to \(\spta{ \nabla f, n}= \spta{ \nabla m, n}=0\) upon the approximation of the non-local term~\(A_{_{f}}^{N}[u](\mathbf{x})\) in \eqref{Eq:nonlocal:Approx-f}. Furthermore, upon discretization in space we have
\begin{align*}
    &\sum_{\tau}^{l}\Bigg[\left(\frac{1}{\Delta t}+\lambda_{_{f}} \right)\int_{\Omega}\psi_{\tau}\psi_{o}\,\text{d}\mathbf{x} + \sum_{q}^{l}2p_{_{f}} w_{_{f}}c_{q}^{g,N}\int_{\Omega}\psi_{q}\psi_{\tau}\psi_{o}\,\text{d}\mathbf{x} + D_{_{f}} \int_{\Omega} \spta{ \nabla\psi_{\tau}, \nabla\psi_{o}} \text{d}\mathbf{x}   \,\,-\nonumber\\
    &\frac{\mu_{_{f}} C_{_{\rho, R}}}{R}\sum_{q}\Bigg[\left(S_{_{ff}}^{^{\max}} c_{q}^{f,N} +S_{_{fm}}^{^{\max}} c_{q}^{m,N}+S_{_{fe}}^{^{\max}} c_{q}^{e,N}\right)\int_{\Omega}\spta{ \nabla\psi_{q}, \nabla \psi_{o}} \psi_{\tau}\:\text{d}\mathbf{x}\Bigg]\Bigg]{c}_{\tau}^{f} \\ 
   & =\int\limits_{\Omega}\sum_{\tau = 1}^{l} \Bigg[ p_{_{f}} c_{\tau}^{g, N}\psi_{\tau} \left(1 - \sum_{s = 1}^{l}\psi_{s} \left(c_{s}^{g,N},c_{s}^{m,N},c_{s}^{e,N} \right)\right) 
    +\frac{1}{\Delta t} c_{\tau}^{e,N}\psi_{\tau}\Bigg]\psi_{o}\;\text{d}\textbf{x}\quad\forall o\in\{1,\dots, l\}.
\end{align*}
We observe that this is already in the general form
\begin{equation*}
    \mathbf{A}_{f} \mathbf{c}^{f}= \mathbf{F}_{_f},
\end{equation*}
where the entries of $\mathbf{A}_{f}\in\mathbb{R}^{l\times l}$ are given as
\begin{align*}
    (\mathbf{A}_{f})_{\tau o} = &\left(\frac{1}{\Delta t}+\lambda_{_{f}} \right)\int_{\Omega}\psi_{\tau}\psi_{o}\,\text{d}\mathbf{x} + \sum_{q}^{l}2p_{_{f}} w_{_{f}}c_{q}^{g,N}\int_{\Omega}\psi_{q}\psi_{\tau}\psi_{o}\,\text{d}\mathbf{x} + D_{_{f}} \int_{\Omega} \spta{ \nabla\psi_{\tau}, \nabla\psi_{o}} \text{d}\mathbf{x}   \,\,-\nonumber\\
    &\frac{\mu_{_{f}} C_{_{\rho, R}}}{R}\sum_{q}\Bigg[\left(S_{_{ff}}^{^{\max}} c_{q}^{f,N} +S_{_{fm}}^{^{\max}} c_{q}^{m,N}+S_{_{fe}}^{^{\max}} c_{q}^{e,N}\right) \int_{\Omega}\spta{ \nabla\psi_{q}, \nabla \psi_{o}} \psi_{\tau}\;\text{d}\mathbf{x}\Bigg],
\end{align*}
and the entries of $(\mathbf{F}_{f})_{o}\in\mathbb{R}^{l}$ as
\begin{equation*}
    (\mathbf{F}_{f})_{o} = \int\limits_{\Omega}\sum_{\tau = 1}^{l} \Bigg[ p_{_{f}} c_{\tau}^{g, N}\psi_{\tau} \left(1 - \sum_{s = 1}^{l}\psi_{s} \left(c_{s}^{g,N},c_{s}^{m,N},c_{s}^{e,N} \right)\right)
    +\frac{1}{\Delta t} c_{\tau}^{e,N}\psi_{\tau}\Bigg]\psi_{o}\;\text{d}\textbf{x}.
\end{equation*}
We note that we can write \(\mathbf{A}_{e}\) in the compact form
\begin{align}
\label{Eq:finalLHSmatrixFIB}
   \mathbf{A}_{f} = \left(\frac{1}{\Delta t}+\lambda_{_{f}}\right) \mathbf{M} + 2p_{_{f}} w_{_{f}}\mathbf{T}({c}^{g, N}) + D_{_{f}} \mathbf{K} - 
    \frac{\mu_{_{f}} C_{_{\rho, R}}}{R}\left(S_{_{ff}}^{^{\max}} \mathbf{H}({c}^{f,N})
    +S_{_{fm}}^{^{\max}} \mathbf{H}({c}^{m,N})+S_{_{fe}}^{^{\max}}\mathbf{H}({c}^{e,N})\right),
\end{align}
where $\mathbf{M}$ is the usual mass matrix, 
\begin{align*}
\mathbf{T}\left(c^{g, N}\right) =  \begin{bmatrix}
        \sum_{q}^{l} c_{q}^{g, N} \int\limits_{\Omega} \psi_{q} \psi_{1} \psi_{1}d\mathbf{x} &\ldots &  \sum_{q}^{l} c_{q}^{g, N} \int\limits_{\Omega} \psi_{q} \psi_{1} \psi_{l}d\mathbf{x}\\
        \vdots & & \vdots\\
          \sum_{q}^{l} c_{q}^{g, N} \int_{\Omega} \psi_{q} \psi_{l} \psi_{1}d\mathbf{x} &\ldots &  \sum_{q}^{l} c_{q}^{g, N} \int\limits_{\Omega} \psi_{q} \psi_{l} \psi_{l}d\mathbf{x}\end{bmatrix},
\end{align*} 
and 
\begin{align*}
    \mathbf{H}({c}^{i,N}) =  
    \begin{bmatrix}
        \sum_{q=1}^{l}c^{i,N}_{q}\int\limits_{\Omega} \spta{ \nabla\psi_{q}, \nabla\psi_{1}} \psi_{1}d\mathbf{x} &\ldots & \sum_{q=1}^{l}c^{i,N}_{q}\int\limits_{\Omega} \spta{ \nabla\psi_{q}, \nabla\psi_{l}} \psi_{1}d\mathbf{x}\\
        \vdots & & \vdots\\
        \sum_{q=1}^{l}c^{i,N}_{q}\int_{\Omega} \spta{ \nabla\psi_{q}, \nabla\psi_{1}} \psi_{l}d\mathbf{x} &\ldots &  \sum_{q=1}^{l}c^{i,N}_{q}\int\limits_{\Omega} \spta{ \nabla\psi_{q}, \nabla\psi_{l}} \psi_{l}d\mathbf{x}
    \end{bmatrix}, \quad i\in\{f, m, e\}.
\end{align*} 
To estimate the condition number of \(\mathbf{A}_{f}\in\mathbb{R}^{l\times l}\), just as for the \(g\)-equation we will first derive its quadratic form \(\spta{ \mathbf{A}_{f} \mathbf{c}^{f},\mathbf{c}^{f}}\) by deriving the quadratic form of each of the matrices that constitutes it (since \(\mathbf{A}_{f}\) is simply a linear combination of theses matrices). Note that we only do this for each of the matrices $\mathbf{H}({c}^{i,N}), i\in\{f,m,e\}$ , since the quadratic forms of the matrices \(\mathbf{M}, \mathbf{K}\) and \(\mathbf{T}(c^{u,N}), u\in\{g, f, m, e\}\), have been previous derived in \eqref{Eq:quadForm-MassMatrix}, \eqref{Eq:quadForm-StiffMatrix} and \eqref{Eq:quadForm-Tc-ECM}, respectively.
\\

For an arbitrary \(\Tilde{f} = \sum_{\tau =1}^{l} c_{\tau}^{f} \psi_{\tau}\) and \(\Tilde{u}^{N} = \sum_{\tau =1}^{l} c^{u,N}_{\tau} \psi_{\tau}\), where \(\Tilde{u}\) could be any of the variables \(\{g,f,m,e\}\), we have that
{\small
\begin{align}
    \int\limits_{\Omega}\spta{ \nabla \Tilde{f} , \nabla \Tilde{f}}\, \Tilde{u}^{N}\:\text{d}\mathbf{x} =\:&
     \sptb{
    \begin{bmatrix}
        \sptb{
        \begin{bmatrix}
            \int\limits_{\Omega} \spt{\nabla \psi_{1}, \nabla\psi_{1}} \psi_{1}d\mathbf{x} &\ldots&  \int\limits_{\Omega} \spta{\nabla\psi_{1} , \nabla\psi_{1}} \psi_{l}d\mathbf{x}\\
            \vdots & & \vdots\\
            \int\limits_{\Omega} \spt{\nabla\psi_{1}, \nabla\psi_{l}} \psi_{1}d\mathbf{x} &\ldots & \int\limits_{\Omega} \spta{\nabla\psi_{1}, \nabla\psi_{l}} \psi_{l}d\mathbf{x}
        \end{bmatrix}
        \mathbf{c}^{f},\mathbf{c}^{f}} \nonumber\\
        \vdots\\
        \sptb{
        \begin{bmatrix}
            \int\limits_{\Omega} \spt{\nabla\psi_{l}, \nabla\psi_{1}} \psi_{1}d\mathbf{x} &\ldots&  \int\limits_{\Omega} \spta{\nabla\psi_{l}, \nabla\psi_{1}} \psi_{l}d\mathbf{x}\\
            \vdots & & \vdots\\
            \int\limits_{\Omega} \spt{\nabla\psi_{l}, \nabla \psi_{l}} \psi_{1}d\mathbf{x} &\ldots & \int\limits_{\Omega} \spta{\nabla\psi_{l}, \psi_{l} }\psi_{l}d\mathbf{x}
        \end{bmatrix}
        \mathbf{c}^{f},\mathbf{c}^{f}} 
    \end{bmatrix}, \mathbf{c}^{u, N} } \nonumber\\
    =& \sptb{
    \begin{bmatrix}
        \int\limits_{\Omega} \spt{\nabla \psi_{1}, \nabla\psi_{1}} \psi_{1}d\mathbf{x} &\ldots&  \int\limits_{\Omega} \spt{\nabla\psi_{1} , \nabla\psi_{l}} \psi_{1}d\mathbf{x}\\
        \vdots & & \vdots\\
        \int\limits_{\Omega} \spt{\nabla\psi_{1}, \nabla\psi_{1}} \psi_{l}d\mathbf{x} &\ldots & \int\limits_{\Omega} \spt{\nabla\psi_{1}, \nabla\psi_{l}} \psi_{l}d\mathbf{x}
    \end{bmatrix}
    \mathbf{c}^{f},\mathbf{c}^{f}} c_{1}^{u,N} \nonumber \\ &+\ldots+\sptb{
    \begin{bmatrix}
        \int\limits_{\Omega} \spt{\nabla\psi_{l}, \nabla\psi_{1}} \psi_{1}d\mathbf{x} &\ldots&  \int\limits_{\Omega} \spta{\nabla\psi_{l}, \nabla\psi_{l}} \psi_{1}d\mathbf{x}\\
        \vdots & & \vdots\\
        \int\limits_{\Omega} \spta{\nabla\psi_{l}, \nabla \psi_{1}} \psi_{l}d\mathbf{x} &\ldots & \int\limits_{\Omega} \spt{\nabla\psi_{l}, \psi_{l} }\psi_{l}d\mathbf{x}
    \end{bmatrix}\mathbf{c}^{f},\mathbf{c}^{f}} c_{l}^{u,N}   \nonumber\\[1.0ex]
    =&\sptb{
    \begin{bmatrix}
        \sum_{q}^{l} c_{q}^{u, N} \int\limits_{\Omega} \spt{\nabla\psi_{q},\nabla \psi_{1} }\psi_{1}d\mathbf{x} &\ldots &  \sum_{q}^{l} c_{q}^{u, N} \int\limits_{\Omega} \spta{\nabla\psi_{q} ,\nabla\psi_{l} } \psi_{1}d\mathbf{x}\\
        \vdots & & \vdots\\
        \sum_{q}^{l} c_{q}^{u, N} \int\limits_{\Omega} \spta{\nabla\psi_{q}, \nabla \psi_{1} }\psi_{l}d\mathbf{x} &\ldots &  \sum_{q}^{l} c_{q}^{u, N} \int\limits_{\Omega} \spta{\nabla\psi_{q}, \nabla \psi_{l} }\psi_{l}d\mathbf{x}
    \end{bmatrix}\mathbf{c}^{f},\mathbf{c}^{f}} \nonumber\\[1.0ex]
    =&\spta{ \mathbf{H}({c}^{u,N})\mathbf{c}^{f},\mathbf{c}^{f} }. 
   \label{Eq:quadFormMatrix-StiffMatUn}
\end{align}}
\begin{remark}
\label{rem:Af}
  Note that, preserving the precise implicit scheme that we implemented for our previous numerical results in refs.~\cite{AdebayoEtAl2023,AdebayoTrucuEftimie2025}, which considered the approximation for the linearization of the adhesion flux \(-\mu_{_{f}}\int_{\Omega}\spta{A_{f}^{N}[u],f^{N+1}\,\nabla  v_{_{f}}} \text{d}\mathbf{x} \), leads to 
   a non-symmetric matrix emerging from the discretization of the term \(-\mu_{_{f}}\int_{\Omega}\spta{\Tilde{A}^{N}_{f}[u],f^{N+1}\nabla v_{f}}  \text{d}\mathbf{x}\)  that appears in equation \eqref{Eq:weakForm-fib}, which complicates significantly the analysis results for the estimation of the condition number. To address this issue, we propose an amendment of the initial numerical scheme by which the approximation of the adhesion flux is rather given by \(-\mu_{_{f}}\int_{\Omega}\spta{ \Tilde{A}^{N}_{f}[u], f^{N}\nabla v_{f}} \). In support of this amendment, we carried out a series of numerical tests in which the plots of the $L^{2}$ norm of the difference in the results (obtained with the two approximations of the adhesion flux) are shown in Figure~\ref{Fig-L2NormDifference}. We observe in this figure that the $L^{2}$ norm of the difference is very small (i.e., it remains below \(10^{-8}\) for the fibroblasts, and below $10^{-4}$ for the rest of the variables), which strengthens the case for us to continue the condition number analysis with the amended scheme.
\end{remark}

\begin{figure}
    \centering \includegraphics[width=4.5in]{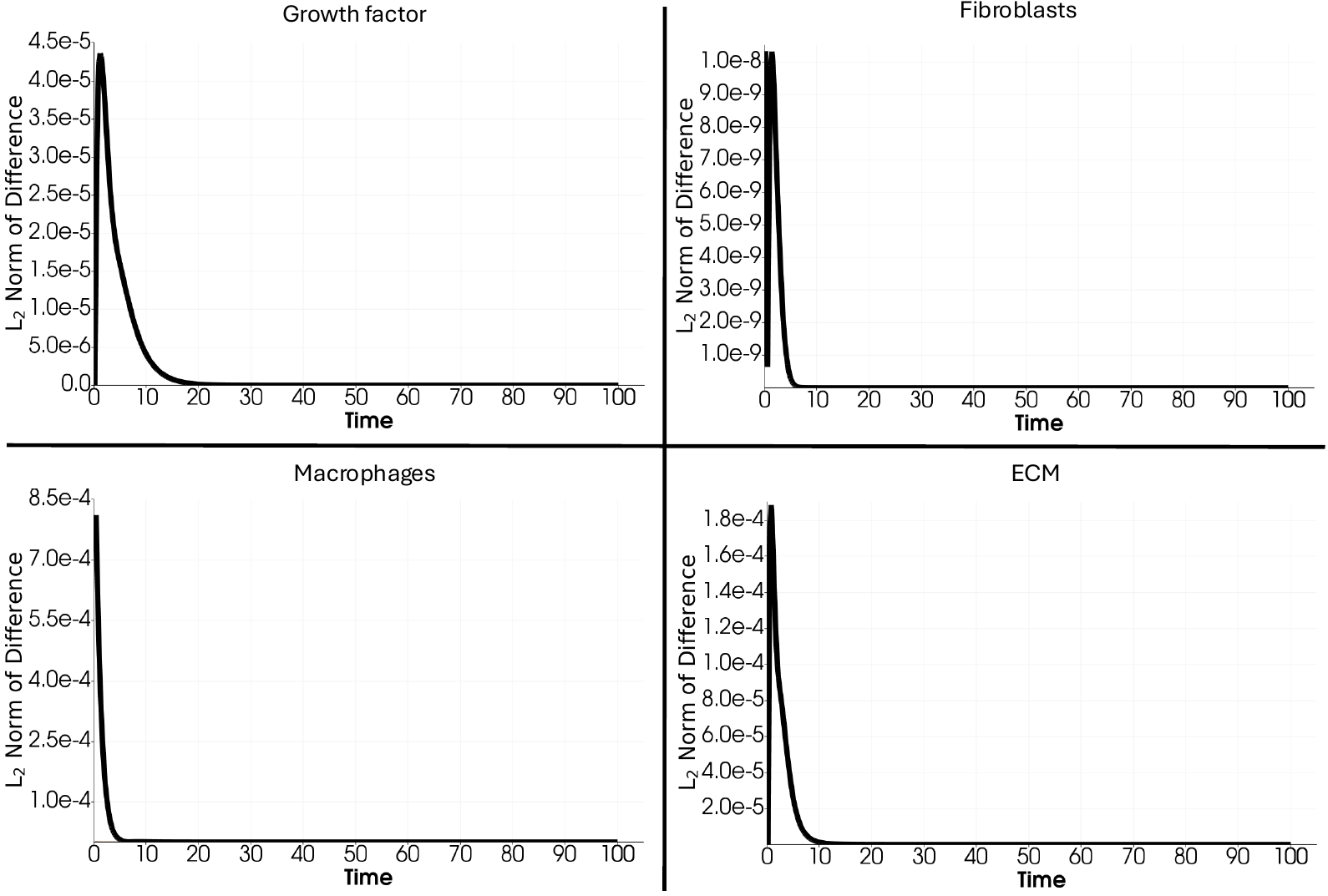}
    \caption{\small
            Numerical simulation showing the L$_{_{2}}$ norm of the differences between when the system is solved strictly using the initial numerical scheme where the approximation of the adhesion flux is given by \(-\mu_{_{f}}\spta{ \Tilde{A}_{f}^{N}[u],f^{N+1} \,\nabla  v_{_{f}}} \) and when it is solved using the amended scheme where the approximation of the adhesion is now given by \(-\mu_{_{f}}\spta{ \Tilde{A}_{f}^{N}[u],f^{N} \,\nabla  v_{_{f}}} \). We show this difference for the growth factor, fibroblasts, macrophages and ECM  as time goes from $t = 0$ to $t = 100$. The initial conditions and numerical setup are described in Appendices~\ref{appendix-full-discretization} and \ref{Appendix:num-det}. The parameter values for the simulations are shown in Table \ref{Table-Param}.}
            \label{Fig-L2NormDifference}
\end{figure}
\noindent Based on Remark \ref{rem:Af}, \(\mathbf{A}_{f}\) now reduces to 
\begin{align}
    \mathbf{A}_{f} = \left(\frac{1}{\Delta t}+\lambda_{_{f}}\right) \mathbf{M} + 2p_{_{f}} w_{_{f}}\mathbf{T}({c}^{g, N}) + D_{_{f}} \mathbf{K},
    \label{Eq:final-A_f}
\end{align}
and \(\mathbf{F}_{f}\) now has entries
\begin{align*}
     (\mathbf{F}_{f})_{o} = \int\limits_{\Omega}\sum_{\tau = 1}^{l} \Bigg[ p_{_{f}} c_{\tau}^{g, N}\psi_{\tau} \left(1 - \sum_{s = 1}^{l}\psi_{s} \left(c_{s}^{g,N},c_{s}^{m,N},c_{s}^{e,N} \right)\right)
    +\frac{1}{\Delta t} c_{\tau}^{e,N}\psi_{\tau}\Bigg]\psi_{o}\;\text{d}\textbf{x} - \\
    \frac{\mu_{_{f}} C_{_{\rho, R}}}{R}\sum_{q}\Bigg[\left(S_{_{ff}}^{^{\max}} c_{q}^{f,N} +S_{_{fm}}^{^{\max}} c_{q}^{m,N}+S_{_{fe}}^{^{\max}} c_{q}^{e,N}\right)\int_{\Omega}\spta{ \nabla\psi_{q}, \nabla \psi_{o}} \psi_{\tau}\:\text{d}\mathbf{x}\Bigg].
\end{align*}
Note that the approximation of the adhesion flux now contains only known terms (\textit{i.e}., terms at time $t= N$) in the amended numerical scheme, and therefore they are moved to the right-hand-side matrix \(\mathbf{F}_{f}\). 
Finally, to obtain the Rayleigh quotient of \(\mathbf{A}_{f}\), we derive bounds for the quadratic forms each matrix that makes  up \(\mathbf{A}_{f}\) (see eq.~\eqref{Eq:final-A_f}). To this end, the following remark is an extension of Lemma~\ref{lemma:mass-mat:stiff-mat} and a completion of Lemma~\ref{Lemma:doubleECMKnownSol}.
\begin{remark}
\label{Rem:Fib}
    Given that Lemma~\ref{lemma:mass-mat:stiff-mat} holds, then we have that for any arbitrary $\Tilde{f}= \sum_{\tau}^{l}c_{\tau}^{f}\psi_{\tau}$ and $\Tilde{u}^{N}= \sum_{\tau}^{l}c_{\tau}^{u, N}\psi_{\tau},\: u\in~\{g,f,m,e\}$, there exist constants $\zeta^{^{f}}_{1}$ and $\zeta^{^{f}}_{_{2}}$ depending on $\eta_{1}$ and $\eta_{_{2}}$ (defined in~\eqref{Eq:h_kBound} and~\eqref{Eq:varrho_k-h_kBound}, respectively, and independent of $h$)  such that
    \begin{align}
        \zeta^{^{f}}_{1}h^{^{2}}\left\|\mathbf{c}^{u, N}\right\|^{^{2}}_{_{2}}\|\mathbf{c}\|^{^{2}}_{_{2}}\le\int\limits_{\Omega} \spta{ f,  f }\, \Tilde{u}^{N} \text{d}\mathbf{x}\le  \zeta^{^{f}}_{_{2}}h^{^{2}} \left\|\mathbf{c}^{u, N}\right\|^{^{2}}_{_{2}} \|\mathbf{c}\|^{^{2}}_{_{2}}.
        \label{InEq:mmm_bound}
    \end{align}
\end{remark}
Having obtained the necessary quadratic forms for the terms that constitute \(\mathbf{A}_{f}\), we proceed to estimate their corresponding eigenvalue bounds and ultimately combine them to get the bound for the condition number $k(\mathbf{A}_{f})$. First, we obtain upper and lower bounds for the quadratic form of the first term in~\eqref{Eq:final-A_f} involving the mass matrix \(\mathbf{M}\), by multiplying the $\Tilde{f}$-equivalence of \eqref{InEq:mm_bound-g} by $\left(\frac{1}{\Delta t}+\lambda_{_{f}} \right)$ to obtain
\begin{equation*}
   \left(\frac{1}{\Delta t}+\lambda_{_{f}}\right) \spta{ \mathbf{Mc}^{f},\mathbf{c}^{f}} \le \left(\frac{1}{\Delta t}+\lambda_{_{f}} \right) \zeta^{^{f}}_{1}h^{^{2}}\|\mathbf{c}^{f}\|_{_{2}}^{^{2}},
\end{equation*}
and 
\begin{equation*}
  \left(\frac{1}{\Delta t}+\lambda_{_{f}}\right)\spta{ \mathbf{Mc}^{f},\mathbf{c}^{f}} \ge  \left(\frac{1}{\Delta t}+\lambda_{_{f}} \right) \zeta^{^{f}}_{_{2}}h^{^{2}}\|\mathbf{c}^{f}\|^{^{2}}_{_{2}},
\end{equation*}
 since $\left(\frac{1}{\Delta t}+\lambda_{_{f}} \right)> 0$ and $\mathbf{M}$ is positive definite, where $\zeta^{^{f}}_{1}$ and $\zeta^{^{f}}_{_{2}}$ are the similar constants defined in~\eqref{lemma:mass-mat:stiff-mat} for the $\Tilde{f}$-case. For the second matrix on the right-hand-side in~\eqref{Eq:final-A_f}, we obtain the upper and lower bounds for its quadratic form by multiplying~\eqref{InEq:mmm_bound} by the constant $2p_{_{f}} w_{_{f}}$ together with the $\Tilde{f}$-equivalence of the identity defined in~\eqref{Eq:quadForm-Tc-ECM}
 \begin{align*}
     &2p_{_{f}} w_{_{f}}\spta{\mathbf{T}({c}^{g, N})\mathbf{c}^{f},\mathbf{c}^{f}} \le  2p_{_{f}} w_{_{f}}\zeta^{^{f}}_{_{2}}h^{^{2}} \left\|\mathbf{c}^{u, N}\right\|^{^{2}}_{_{2}} \|\mathbf{c}^{^{f}}\|^{^{2}}_{_{2}},\\
     &2p_{_{f}} w_{_{f}}\spta{\mathbf{T}({c}^{g, N})\mathbf{c}^{f},\mathbf{c}^{f}} \ge  2p_{_{f}} w_{_{f}}\zeta^{^{f}}_{_{2}}h^{^{2}} \left\|\mathbf{c}^{u, N}\right\|^{^{2}}_{_{2}} \|\mathbf{c}^{f}\|^{^{2}}_{_{2}},
 \end{align*}
  which are the needed upper and lower bounds, respectively. For the third matrix on the right-hand-side in~\eqref{Eq:final-A_f} corresponding to the stiffness matrix, we obtain the following upper and lower bounds for its quadratic form by multiplying $\Tilde{f}$-equivalence of~\eqref{InEq:sm_bound-g} by $D_{_{f}}$ to get
\begin{align*}
    0&\le D_{_{f}}\spta{\mathbf{c}^{f},\mathbf{K}\mathbf{c}^{f}}
    ,\;\;\;\;
    D_{_{f}}\spta{\mathbf{c}^{f},\mathbf{K}\mathbf{c}^{f}}\le D_{_{f}}\zeta^{^{f}}_{_{2}}\|\mathbf{c}\|^{^{2}}_{_{2}}.
\end{align*} 
Here we have also used the boundary condition \( \spta{ \nabla \Tilde{f},n} = 0\), 
together with the \(\Tilde{f}\)-equivalence of \eqref{InEq:mm_bound-g} in Lemma~\ref{lemma:mass-mat:stiff-mat}.
Now, we can estimate the bounds for the minimum and maximum eigenvalues of \(\mathbf{A}_{f}\) defined in~\eqref{Eq:final-A_f}, using the summation of the obtained bounds for the constituting matrices:
\begin{align*}
    \frac{\spta{ \mathbf{A}_{f} \mathbf{c}^{^{f}}, \mathbf{c}^{^{f}}}}{\left\|\mathbf{c}^{^{f}}\right\|_{_{2}}^{^{2}}} =& \left(\frac{1}{\Delta t}+\lambda_{_{f}}\right)\frac{\spta{ \mathbf{M} \mathbf{c}^{^{f}}, \mathbf{c}^{^{f}}}}{\left\|\mathbf{c}^{^{f}}\right\|_{_{2}}^{^{2}}} + 2p_{_{f}} w_{_{f}}\frac{\spta{\mathbf{T}(c^{g,N}) \mathbf{c}^{^{f}}, \mathbf{c}^{^{f}}}}{\left\|\mathbf{c}^{^{f}}\right\|_{_{2}}^{^{2}}} + D_{_{f}}\frac{\spta{ \mathbf{K} \mathbf{c}^{^{f}}, \mathbf{c}^{^{f}}}}{\left\|\mathbf{c}^{^{f}}\right\|_{_{2}}^{^{2}}}\\
    \le&\left(\frac{1}{\Delta t}+\lambda_{_{f}} \right) \zeta^{^{f}}_{1}h^{^{2}} + 2p_{_{f}} w_{_{f}}\zeta^{^{f}}_{_{2}}h^{^{2}} \left\|\mathbf{c}^{u, N}\right\|^{^{2}}_{_{2}}  + D_{_{f}}\zeta^{^{f}}_{_{2}}
\end{align*}
 and 
 \begin{align*}
    \frac{\spta{ \mathbf{A}_{f} \mathbf{c}^{^{f}}, \mathbf{c}^{^{f}}}}{\left\|\mathbf{c}^{^{f}}\right\|_{_{2}}^{^{2}}} =& \left(\frac{1}{\Delta t}+\lambda_{_{f}}\right)\frac{\spta{ \mathbf{M} \mathbf{c}^{^{f}}, \mathbf{c}^{^{f}}}}{\left\|\mathbf{c}^{^{f}}\right\|_{_{2}}^{^{2}}} + 2p_{_{f}} w_{_{f}}\frac{\spta{ \mathbf{T}\mathbf{c}^{g, N} \mathbf{c}^{^{f}}, \mathbf{c}^{^{f}}}}{\left\|\mathbf{c}^{^{f}}\right\|_{_{2}}^{^{2}}} + D_{_{f}}\frac{\spta{ \mathbf{K} \mathbf{c}^{^{f}}, \mathbf{c}^{^{f}}}}{\left\|\mathbf{c}^{^{f}}\right\|_{_{2}}^{^{2}}}\\
    \ge&\left(\frac{1}{\Delta t}+\lambda_{_{f}} \right) \zeta^{^{g}}_{1}h^{^{2}} + 2p_{_{f}} w_{_{f}}\zeta^{^{f}}_{_{2}}h^{^{2}} \left\|\mathbf{c}^{u, N}\right\|^{^{2}}_{_{2}}.
\end{align*}
Hence the condition number \(k\left(\mathbf{A}_{f}\right)\) assumes the following bound:
\begin{align*}
k\left(\mathbf{A}_{f}\right)=\frac{\Lambda_{_\max}\left(\mathbf{A}_{f}\right)}{\Lambda_{_{\min}}\left(\mathbf{A}_{f}\right)}
    &\le \frac{\left(\frac{1}{\Delta t}+\lambda_{_{f}} \right) \zeta^{^{g}}_{1}h^{^{2}} + 2p_{_{f}} w_{_{f}}\zeta^{^{f}}_{_{2}}h^{^{2}} \left\|\mathbf{c}^{u, N}\right\|^{^{2}}_{_{2}}  + D_{_{f}}\zeta^{^{g}}_{_{2}}}{\left(\frac{1}{\Delta t}+\lambda_{_{f}} \right) \zeta^{^{g}}_{1}h^{^{2}} + 2p_{_{f}} w_{_{f}}\zeta^{^{f}}_{_{2}}h^{^{2}} \left\|\mathbf{c}^{u, N}\right\|_{_{2}}}\nonumber\\
    &=1 + \frac{D_{_{f}}\zeta^{^{g}}_{_{2}}}{\left(\frac{1}{\Delta t}+\lambda_{_{f}} \right) \zeta^{^{g}}_{1} + 2p_{_{f}} w_{_{f}}\zeta^{^{f}}_{_{2}} \left\|\mathbf{c}^{u, N}\right\|^{^{2}}_{_{2}}} h^{^{-2}}.
\end{align*}

\paragraph{Condition number of the \(m\)-equation.} 
Following here the same argument as for the $f$-equation, we get the corresponding $m$-equation as:
\begin{align}
    \int_{\Omega}\Bigg[\frac{m - m^{N}}{\Delta t} v_{_{m}} + D_{_{m}}\spta{ \nabla m, \nabla v_{_{m}}} + \mu_{_{m}}\spta{ A_{_{m}}^{N}[u],m\nabla v_{_{m}} }\nonumber  
    + \lambda_{_{m}} m v_{_{m}} \\
    - p_{_{m}} g^{N} m\left(1-w_{_{g}} g^{N} - w_{_{f}} f^{N} - w_{_{m}} m - w_{_{e}} e^{N}\right) v_{_{m}}\Bigg]\text{d}\mathbf{x} = 0 \quad \forall v_{_{m}},
    \label{Eq:mac:timeDisc}        
\end{align}
where 
\begin{align}
    A_{_{m}}^{N}[u](\mathbf{x}) = \frac{1}{R}{\int\limits_{\textbf{B}_{_{\|\cdot\|_{_{\infty}}}}\!\!(\textbf{0},R)}}\!\!\!\mathrm{K}(\|\mathbf{y}\|_{_{2}})  \mathbf{n}(\mathbf{y})\left(\!\!\left(1-\rho\left(u^{N}\right)\!\!\right)^{+}\,{\Gamma}_{_m}\left(u^{N}\right)\!\!\right)(\mathbf{x}+\mathbf{y},t)\, \text{d}\mathbf{y}.
    \label{Eq:nonlocal_A_m}
\end{align}
 
We see that the fifth term on the left-hand-side of \eqref{Eq:mac:timeDisc} (which we denote as $T^{m}_{_{5}}$) is non-linear in the unknown $m$. For that reason, we linearize it with respect to the unknown values $m$ evaluated at the known (\textit{i.e}., at time $N$) values of the other components $g, f,$ and $e$: 
\begin{equation}
    \frac{d T^{m}_{_{5}}}{d m} = 2 p_{_{m}}g^{N}w_{_{m}} m - p_{_{m}}g^{N}\left(1 \!-\! w_{_{g}}g^{N}\!-\! w_{_{f}} f^{N}\!-\! w_{_{e}} e^{N}\right).
    \label{Eq:linearizing_logistic_m}
\end{equation}
Furthermore, just as we did for the $f$-case, we approximate some terms in the integrand in~\eqref{Eq:nonlocal_A_m} as follows:
\begin{equation*}
    0\le \left(1-\rho\left(u^{N}(\mathbf{x}+\mathbf{y},t)\right)\right)^{^{\!\!+}} = \left(\!1-\!(w_{_{g}} g^{N}\!\!\! +\! w_{_{f}} f^{N}\!\!\! + \!w_{_{m}} m^{N}\!\!\! +\!w_{_{e}} e^{N})(\mathbf{x}+\mathbf{y},t)\!\right)^{^{\!\!+}} \le 1,
\end{equation*} 
and 
\begin{align*}
   {\Gamma}_{_m}\left(u^{N}\right)(\mathbf{x}+\mathbf{y},t) &= \left(\!\!\frac{e^{N}\!\! + \!g^{N}}{1\!+\!e^{N}\!\! + \!g^{N}}\!\!\right)\!\left(\!S_{_{mf}}^{\max} f^{N} \!\!+S_{_{mm}}^{\max} m^{N} \!\!+S_{_{me}}^{\max} e^{N}\!\right)(\mathbf{x}+\mathbf{y},t)\\
    &\le S_{_{mf}}^{\max} f^{N}(\mathbf{x}+\mathbf{y},t) \!\!+S_{_{mm}}^{\max} m^{N}(\mathbf{x}+\mathbf{y},t) \!\!+S_{_{me}}^{\max} e^{N}(\mathbf{x}+\mathbf{y},t),
\end{align*}
where we have used the fact that $\left(e^{N}\!\! + \!g^{{N}}\right)/\left(1\!+\!e^{{N}}\!\! + \!g^{{N}}\right)\le 1$. Furthermore, approximating 
each $ j^{N}(\mathbf{x}+\mathbf{y},t), j\in\{f, m, e\}$ with its Taylor expansion and assuming that the higher order terms are small enough to be ignored, we have
\begin{align*}
   j^{N}(\mathbf{x}+\mathbf{y},t) \approx j^{N}\!(\mathbf{x},t) + \spta{\! \nabla j^{N}\!(\mathbf{x},t), \,\mathbf{y}\!},\;\;\;j\in\{f, m, e\}.
\end{align*}
Now, substituting the unit normal defined in \eqref{surf:normal:def} and the kernel defined in \eqref{kernel:Gaussian} together with the above approximations, following the exact steps as in the $f$-case (see~\eqref{Eq:nonlocal:Approx-f}), the non-local term $A_{_{m}}^{N}[u](\mathbf{x})$ can now be approximated by $\Tilde{A}_{_{m}}^{N}[u](\mathbf{x})$ that is given by
\begin{align}
    \Tilde{A}_{_{m}}^{N}[u](\mathbf{x}): 
      =& \frac{C_{_{\rho, R}}}{R}\left(S_{_{mf}}^{^{\max}} \nabla m^{N}(\mathbf{x},t) + S_{_{mm}}^{^{\max}}\nabla m^{N}(\mathbf{x},t)+S_{_{me}}^{^{\max}} \nabla e^{N}(\mathbf{x},t)\right).
       \label{Eq:nonlocal:Approx-m}
\end{align}
Here 
\[ 
C_{_{\sigma, R}} = \frac{-\rho\,\text{ erf} \left(\frac{R\sqrt{2}}{2\rho}\right)\left[\sqrt{2\pi} \,\,e^{-R^{^{2}}/2\rho^{^{2}}} R - \pi \text{ erf}\left(\frac{R\sqrt{2}}{2\rho}\rho\right)\right]}{\pi},
\] 
since \[\int\limits_{\textbf{B}_{_{\|\cdot\|_{_{\infty}}}}\!\!(\textbf{0},R)}\!\!\!\!\!\!e^{^{-{\|\mathbf{y}\|_{_{2}}^{^{2}}\big/2\sigma^{^{2}}}}} y_{i} y_{j}\;\text{d}\mathbf{x} =
\begin{cases} C_{_{\rho, R}}, & i=j\\
    0, &i\neq j
\end{cases}  
\quad i,j\in\{1,2\}.
\]

Now that we are done with the various approximations, we substitute the approximation of \(T^{m}_{5}\) in~\eqref{Eq:linearizing_logistic_m} and the approximation of the non-local term $ A_{_{m}}^{N}[u](\mathbf{x})$ in~\eqref{Eq:nonlocal:Approx-m} into \eqref{Eq:mac:timeDisc} and integrate by parts to obtain
\begin{align*}
     &\int_{\Omega}\Bigg[\frac{m - m^{N}}{\Delta t} v_{_{m}} + D_{_{m}}\spta{ \nabla m, \nabla v_{_{m}}} 
     - \frac{\mu_{_{m}} C_{_{\rho, R}}}{R}\left(S_{_{mf}}^{^{\max}}\!  m  \spta{ \nabla f^{N},\nabla  v_{_{m}}  }+\right.
     S_{_{mm}}^{^{\max}}\! m  \spta{ \nabla m^{N},\nabla  v_{_{m}}  }+\\
     &S_{_{me}}^{^{\max}}\! m  \left.\spta{ \nabla e^{N},\nabla  v_{_{m}}  }\right)  
      +\lambda_{_{m}} m v_{_{m}} +  2 p_{_{m}}g^{N}w_{_{m}} m - p_{_{m}}g^{N}\left(1 \!-\! w_{_{g}}g^{N}\!-\! w_{_{f}} f^{N}\!-\! w_{_{e}} e^{N}\right) v_{_{m}}\Bigg]\text{d}\mathbf{x} = 0 \quad\forall v_{_{m}}.
\end{align*}
Here we have used the boundary condition in~\eqref{EqBC:ExisUniq}, which reduces simply to \(\spta{ \nabla f, n}= \spta{ \nabla m, n}=0\) upon the approximation of the non-local term~\(A_{_{m}}^{N}[u](x)\) in \eqref{Eq:nonlocal:Approx-m}. Furthermore, upon discretization in space we have 
\begin{align*}
    &\sum_{\tau}^{l}\Bigg[\left(\frac{1}{\Delta t}+\lambda_{_{m}} \right)\int_{\Omega}\psi_{\tau}\psi_{o}\,\text{d}\mathbf{x} + \sum_{q}^{l}2p_{_{m}} w_{_{m}}c_{q}^{g,N}\int_{\Omega}\psi_{q}\psi_{\tau}\psi_{o}\,\text{d}\mathbf{x} + D_{_{m}} \int_{\Omega} \spta{ \nabla\psi_{\tau}, \nabla\psi_{o}} \text{d}\mathbf{x}   \,\,-\nonumber\\
    &\frac{\mu_{_{m}} C_{_{\rho, R}}}{R}\sum_{q}\Bigg[\left(S_{_{mf}}^{^{\max}} c_{q}^{f,N} +S_{_{mm}}^{^{\max}} c_{q}^{m,N}+S_{_{me}}^{^{\max}} c_{q}^{e,N}\right) 
    \int_{\Omega}\spta{ \nabla\psi_{q}, \nabla \psi_{o}} \psi_{\tau}\text{d}\mathbf{x}\Bigg]\Bigg]{c}_{\tau}^{m}\\  
    &=\int\limits_{\Omega}\sum_{\tau = 1}^{l} \Bigg[ p_{_{m}} c_{\tau}^{g, N}\psi_{\tau} \left(1 - \sum_{s = 1}^{l}\psi_{s} \left(c_{s}^{g,N},c_{s}^{f,N},c_{s}^{e,N} \right)\right) +\frac{1}{\Delta t} c_{\tau}^{m,N}\psi_{\tau}\Bigg]\psi_{o}\text{d}\textbf{x}\quad\forall o\in\{1,\dots, l\}.
\end{align*}
We observe that this is already in the general form
\begin{equation*}
    \mathbf{A}_{m} \mathbf{c}^{m}= \mathbf{F}_{_m},
\end{equation*}
where the entries of $\mathbf{A}_{m}\in\mathbb{R}^{l\times l}$ are given as
\begin{align*}
    (\mathbf{A}_{m})_{\tau o} = &\left(\frac{1}{\Delta t}+\lambda_{_{m}} \right)\int_{\Omega}\psi_{\tau}\psi_{o}\,\text{d}\mathbf{x} + \sum_{q}^{l}2p_{_{m}} w_{_{m}}c_{q}^{g,N}\int_{\Omega}\psi_{q}\psi_{\tau}\psi_{o}\,\text{d}\mathbf{x} + D_{_{m}} \int_{\Omega} \spta{ \nabla\psi_{\tau}, \nabla\psi_{o}} \text{d}\mathbf{x}   \,\,-\nonumber\\
    &\frac{\mu_{_{m}} C_{_{\rho, R}}}{R}\sum_{q}\Bigg[\left(S_{_{mf}}^{^{\max}} c_{q}^{f,N} +S_{_{mm}}^{^{\max}} c_{q}^{m,N}+S_{_{me}}^{^{\max}} c_{q}^{e,N}\right)\int_{\Omega}\spta{ \nabla\psi_{q}, \nabla \psi_{o}} \psi_{\tau}\;\text{d}\mathbf{x}\Bigg],
\end{align*}
and the entries of $(\mathbf{F}_{m})_{o}\in\mathbb{R}^{l}$ as
\begin{equation*}
    (\mathbf{F}_{m})_{o} = \int\limits_{\Omega}\sum_{\tau = 1}^{l} \Bigg[ p_{_{m}} c_{\tau}^{g, N}\psi_{\tau} \left(1 - \sum_{s = 1}^{l}\psi_{s} \left(c_{s}^{g,N},c_{s}^{f,N},c_{s}^{e,N} \right)\right)
    +\frac{1}{\Delta t} c_{\tau}^{e,N}\psi_{\tau}\Bigg]\psi_{o}\;\text{d}\textbf{x}.
\end{equation*}
We can write \(\mathbf{A}_{m}\) in the compact form
\begin{align*}
   \mathbf{A}_{m} = \left(\frac{1}{\Delta t}+\lambda_{_{m}}\right) \mathbf{M} + 2p_{_{m}} w_{_{m}}\mathbf{T}({c}^{g, N}) + D_{_{m}} \mathbf{K} -
    \frac{\mu_{_{m}} C_{_{\rho, R}}}{R}\left(S_{_{mf}}^{^{\max}} \mathbf{H}({c}^{f,N})+\left.S_{_{mm}}^{^{\max}}\mathbf{H}({c}^{m,N})\right)+S_{_{me}}^{^{\max}} \mathbf{H}({c}^{e,N})\right),
\end{align*}
where $\mathbf{M}$ is the usual mass matrix, 
\begin{align*}
\mathbf{T}\left(c^{g, N}\right) =  \begin{bmatrix}
        \sum_{q}^{l} c_{q}^{g, N} \int\limits_{\Omega} \psi_{q} \psi_{1} \psi_{1}d\mathbf{x} &\ldots &  \sum_{q}^{l} c_{q}^{g, N} \int\limits_{\Omega} \psi_{q} \psi_{1} \psi_{l}d\mathbf{x}\\
        \vdots & & \vdots\\
          \sum_{q}^{l} c_{q}^{g, N} \int_{\Omega} \psi_{q} \psi_{l} \psi_{1}d\mathbf{x} &\ldots &  \sum_{q}^{l} c_{q}^{g, N} \int\limits_{\Omega} \psi_{q} \psi_{l} \psi_{l}d\mathbf{x}\end{bmatrix},
\end{align*}
and 
\begin{align*}
    \mathbf{H}({c}^{i,N}) =  
    \begin{bmatrix}
        \sum_{q=1}^{l}c^{i,N}_{q}\int\limits_{\Omega} \spta{ \nabla\psi_{q}, \nabla\psi_{1}} \psi_{1}d\mathbf{x} &\ldots & \sum_{q=1}^{l}c^{i,N}_{q}\int\limits_{\Omega} \spta{ \nabla\psi_{q}, \nabla\psi_{l}} \psi_{1}d\mathbf{x}\\
        \vdots & & \vdots\\
        \sum_{q=1}^{l}c^{i,N}_{q}\int_{\Omega} \spta{ \nabla\psi_{q}, \nabla\psi_{1}} \psi_{l}d\mathbf{x} &\ldots &  \sum_{q=1}^{l}c^{i,N}_{q}\int\limits_{\Omega} \spta{ \nabla\psi_{q}, \nabla\psi_{l}} \psi_{l}d\mathbf{x}
    \end{bmatrix}, \quad i\in\{f, m, e\}.
\end{align*}
To estimate the condition number of \(\mathbf{A}_{m}\in\mathbb{R}^{l\times l}\), just as for the \(f\)-equation, we will first derive its quadratic form \(\spta{ \mathbf{A}_{m} \mathbf{c}^{m},\mathbf{c}^{m}}\) by deriving the quadratic form of the matrices that constitute it. We observe that the matrix \(\mathbf{H}({c}^{i,N})\), which is derived from the discretization of the adhesion flux term, is not symmetric. Hence, we amend the numerical scheme to handle it. For more details, we refer the reader to Remark~\ref{rem:Af} (as well as Remark~\ref{rem:Am} below).

For an arbitrary \(\Tilde{m} = \sum_{\tau =1}^{l} c_{\tau}^{m} \psi_{\tau}\) and \(\Tilde{u}^{N} = \sum_{\tau =1}^{l} c^{u,N}_{\tau} \psi_{\tau}\), where \(\Tilde{u}\) could be any of the variables \(\{g,f,m,e\}\), following the same steps as in~\eqref{Eq:quadFormMatrix-StiffMatUn} leads to
{\footnotesize
\begin{align*}
    \int\limits_{\Omega}\spta{\spta{ \nabla \Tilde{m} , \nabla \Tilde{m}}\, \Tilde{u}^{N}}\:\text{d}\mathbf{x} =\:&
     \sptb{
    \begin{bmatrix}
        \sptb{
        \begin{bmatrix}
            \int\limits_{\Omega} \spta{\nabla \psi_{1}, \nabla\psi_{1}} \psi_{1}d\mathbf{x} &\ldots&  \int\limits_{\Omega} \spta{\nabla\psi_{1} , \nabla\psi_{1}} \psi_{l}d\mathbf{x}\\
            \vdots & & \vdots\\
            \int\limits_{\Omega} \spta{\nabla\psi_{1}, \nabla\psi_{l}} \psi_{1}d\mathbf{x} &\ldots & \int\limits_{\Omega} \spta{\nabla\psi_{1}, \nabla\psi_{l}} \psi_{l}d\mathbf{x}
        \end{bmatrix}
        \mathbf{c}^{m},\mathbf{c}^{m}}\\
        \vdots\\
        \sptb{
        \begin{bmatrix}
            \int\limits_{\Omega} \spta{\nabla\psi_{l}, \nabla\psi_{1}} \psi_{1}d\mathbf{x} &\ldots&  \int\limits_{\Omega} \spta{\nabla\psi_{l}, \nabla\psi_{1}} \psi_{l}d\mathbf{x}\\
            \vdots & & \vdots\\
            \int\limits_{\Omega} \spta{\nabla\psi_{l}, \nabla \psi_{l}} \psi_{1}d\mathbf{x} &\ldots & \int\limits_{\Omega} \spta{\nabla\psi_{l}, \psi_{l} }\psi_{l}d\mathbf{x}
        \end{bmatrix}
        \mathbf{c}^{m},\mathbf{c}^{m}} 
    \end{bmatrix}, \mathbf{c}^{u, N} }\\
    =& \sptb{
    \begin{bmatrix}
        \int\limits_{\Omega} \spta{\nabla \psi_{1}, \nabla\psi_{1}} \psi_{1}d\mathbf{x} &\ldots&  \int\limits_{\Omega} \spta{\nabla\psi_{1} , \nabla\psi_{l}} \psi_{1}d\mathbf{x}\\
        \vdots & & \vdots\\
        \int\limits_{\Omega} \spta{\nabla\psi_{1}, \nabla\psi_{1}} \psi_{l}d\mathbf{x} &\ldots & \int\limits_{\Omega} \spta{\nabla\psi_{1}, \nabla\psi_{l}} \psi_{l}d\mathbf{x}
    \end{bmatrix}
    \mathbf{c}^{m},\mathbf{c}^{m}} \:\:c_{1}^{u,N} \\ &+\ldots+\sptb{
    \begin{bmatrix}
        \int\limits_{\Omega} \spta{\nabla\psi_{l}, \nabla\psi_{1}} \psi_{1}d\mathbf{x} &\ldots&  \int\limits_{\Omega} \spta{\nabla\psi_{l}, \nabla\psi_{l}} \psi_{1}d\mathbf{x}\\
        \vdots & & \vdots\\
        \int\limits_{\Omega} \spta{\nabla\psi_{l}, \nabla \psi_{1}} \psi_{l}d\mathbf{x} &\ldots & \int\limits_{\Omega} \spta{\nabla\psi_{l}, \psi_{l} }\psi_{l}d\mathbf{x}
    \end{bmatrix}\mathbf{c}^{m},\mathbf{c}^{m}}\:\: c_{l}^{u,N}  \\[1.0ex]
    =&\sptb{
    \begin{bmatrix}
        \sum_{q}^{l} c_{q}^{u, N} \int\limits_{\Omega} \spta{\nabla\psi_{q},\nabla \psi_{1} }\psi_{1}d\mathbf{x} &\ldots &  \sum_{q}^{l} c_{q}^{u, N} \int\limits_{\Omega} \spta{\nabla\psi_{q} ,\nabla\psi_{l} } \psi_{1}d\mathbf{x}\\[1.0ex]
        \vdots & & \vdots\\
        \sum_{q}^{l} c_{q}^{u, N} \int\limits_{\Omega} \spta{\nabla\psi_{q}, \nabla \psi_{1} }\psi_{l}d\mathbf{x} &\ldots &  \sum_{q}^{l} c_{q}^{u, N} \int\limits_{\Omega} \spta{\nabla\psi_{q}, \nabla \psi_{l} }\psi_{l}d\mathbf{x}
    \end{bmatrix}\mathbf{c}^{m},\mathbf{c}^{m}}\\
    =&\spta{ \mathbf{H}({c}^{u,N})\mathbf{c}^{m},\mathbf{c}^{m} } 
\end{align*}}

\begin{remark}
\label{rem:Am}
    From the resulting matrix \(\mathbf{H}({c}^{u,N})\), corresponding to the nonlocal adhesive term, it is easy to see that we obtain a non-symmetric matrix. Hence, obtaining an estimate of the condition number for \(\mathbf{A}_{m}\) using the Rayleigh quotient approach becomes complicated, since we only use this approach when the ratio of the maximum to minimum eigenvalues coincide with the singular values (\textit{i.e}., when considering symmetric matrices). To address this issue, just as in the $f$-case, we amend the numerical scheme as detailed in Remark~\ref{rem:Af}. Note also that the numerical simulations in Figure~\ref{Fig-L2NormDifference} suggest that there is no significant difference between our initial and amended numerical schemes. 
\end{remark}
\noindent Just as for the $f$-equation, Remark \ref{rem:Am} reduces \(\mathbf{A}_{m}\) to 
\begin{align}
    \mathbf{A}_{m} = \left(\frac{1}{\Delta t}+\lambda_{_{m}}\right) \mathbf{M} + 2p_{_{m}} w_{_{m}}\mathbf{T}({c}^{g, N}) + D_{_{m}} \mathbf{K}.
    \label{Eq:final_Am}
\end{align}
Moreover, \(\mathbf{F}_{m}\) now has entries
\begin{align*}
     (\mathbf{F}_{m})_{o} = \int\limits_{\Omega}\sum_{\tau = 1}^{l} \Bigg[ p_{_{m}} c_{\tau}^{g, N}\psi_{\tau} \left(1 - \sum_{s = 1}^{l}\psi_{s} \left(c_{s}^{g,N},c_{s}^{f,N},c_{s}^{e,N} \right)\right)
    +\frac{1}{\Delta t} c_{\tau}^{e,N}\psi_{\tau}\Bigg]\psi_{o}\;\text{d}\textbf{x} - \\
    \frac{\mu_{_{m}} C_{_{\rho, R}}}{R}\sum_{q}\Bigg[\left(S_{_{mf}}^{^{\max}} c_{q}^{f,N} +S_{_{mm}}^{^{\max}} c_{q}^{m,N}+S_{_{me}}^{^{\max}} c_{q}^{e,N}\right)\int_{\Omega}\spta{ \nabla\psi_{q}, \nabla \psi_{o}} \psi_{\tau}\:\text{d}\mathbf{x}\Bigg].
\end{align*}

\begin{remark}
\label{Rem:Mac}
    Given Remark~\ref{Rem:Fib}, we have that for an arbitrary $\Tilde{m}= \sum_{\tau}^{l}c_{\tau}^{m}\psi_{\tau}$ and $\Tilde{u}^{N}= \sum_{\tau}^{l}c_{\tau}^{u, N}\psi_{\tau},\: u\in~\{g,f,m,e\}$ there exist constants $\zeta^{^{m}}_{1}$ and $\zeta^{^{m}}_{_{2}}$ depending on $\eta_{1}$ and $\eta_{_{2}}$ (defined in~\eqref{Eq:h_kBound} and~\eqref{Eq:varrho_k-h_kBound}, respectively, and independent of $h$) such that
    \begin{align}
    \zeta^{^{m}}_{1}h^{^{2}}\left\|\mathbf{c}^{u, N}\right\|^{^{2}}_{_{2}}\|\mathbf{c}\|^{^{2}}_{_{2}}\le\int\limits_{\Omega} \spta{ m,  m }\, \Tilde{u}^{N} \text{d}\mathbf{x}\le  \zeta^{^{m}}_{_{2}}h^{^{2}} \left\|\mathbf{c}^{u, N}\right\|^{^{2}}_{_{2}} \|\mathbf{c}\|^{^{2}}_{_{2}}\label{InEq:mmm_bound-mac}.
    \end{align}
\end{remark}
Now that we have obtained the necessary quadratic forms for the terms that constitute \(\mathbf{A}_{m}\), we proceed to estimate their corresponding eigenvalue bounds and ultimately combine them to get the bound for the condition number $k(\mathbf{A}_{m})$. First, we obtain upper and lower bounds for the quadratic form of the first term in~\eqref{Eq:final_Am} involving the mass matrix \(\mathbf{M}\), by multiplying the $\Tilde{m}$-equivalence of \eqref{InEq:mm_bound-g} by $\left(\frac{1}{\Delta t}+\lambda_{_{m}} \right)$ to obtain
\begin{equation}
   \left(\frac{1}{\Delta t}+\lambda_{_{m}}\right) \spta{ \mathbf{Mc}^{m},\mathbf{c}^{m}} \le \left(\frac{1}{\Delta t}+\lambda_{_{m}} \right) \zeta^{^{m}}_{1}h^{^{2}}\|\mathbf{c}^{m}\|_{_{2}}^{^{2}},
    \label{Ineq:upbnd_massmatrix_Mac}
\end{equation}
and 
\begin{equation}
  \left(\frac{1}{\Delta t}+\lambda_{_{m}}\right)\spta{ \mathbf{Mc}^{m},\mathbf{c}^{m}} \ge  \left(\frac{1}{\Delta t}+\lambda_{_{m}} \right) \zeta^{^{m}}_{_{2}}h^{^{2}}\|\mathbf{c}^{m}\|^{^{2}}_{_{2}}.
    \label{Ineq:lowbnd_massmatrix_mac}
\end{equation}
 Here, $\left(\frac{1}{\Delta t}+\lambda_{_{m}} \right)> 0$ and $\mathbf{M}$ is positive definite. 
 For the second matrix on the right-hand-side in~\eqref{Eq:final_Am}, we obtain the following upper and lower bounds for its quadratic form by multiplying~\eqref{InEq:mmm_bound-mac} by the constant $2p_{_{m}} w_{_{m}}$ together with the $\Tilde{m}$-equivalence of the identity defined in~\eqref{Eq:quadForm-Tc-ECM}:
 \begin{align*}
     &2p_{_{m}} w_{_{m}}\spta{\mathbf{T}({c}^{g, N})\mathbf{c}^{m},\mathbf{c}^{m}} \le  2p_{_{m}} w_{_{m}}\zeta^{^{m}}_{_{2}}h^{^{2}} \left\|\mathbf{c}^{g, N}\right\|^{^{2}}_{_{2}} \|\mathbf{c}^{^{m}}\|^{^{2}}_{_{2}},\\
     &2p_{_{m}} w_{_{m}}\spta{\mathbf{T}({c}^{g, N})\mathbf{c}^{m},\mathbf{c}^{m}} \ge  2p_{_{m}} w_{_{m}}\zeta^{^{m}}_{_{2}}h^{^{2}} \left\|\mathbf{c}^{g, N}\right\|^{^{2}}_{_{2}} \|\mathbf{c}^{m}\|^{^{2}}_{_{2}}.
 \end{align*}
 For the third term of \(\mathbf{A}_{m}\) as shown in \eqref{Eq:final_Am}, corresponding to the stiffness matrix, we derive the following upper and lower bounds by multiplying \eqref{Eq:quadForm-StiffMatrix} by $D_{_{m}}$ for the case \(\Tilde{m}=\sum_{\tau=1}^{l}c^{m}_{\tau}\psi_{\tau}\):
\begin{align}
    0&\le D_{_{m}}\spta{\mathbf{c}^{m},\mathbf{K}\textbf{c}^{m}}\label{Ineq:lowerbnd-stiffmatrix-mac},\\
    D_{_{m}}\spta{\mathbf{c}^{m},\mathbf{K}\textbf{c}^{m}}&\le D_{_{m}}\zeta^{^{g}}_{_{2}}\|\mathbf{c}\|^{^{2}}_{_{2}},
    \label{Ineq:upperbnd-stiffmatrix-mac}
\end{align} 
where we have also used the boundary condition \( \spta{ \nabla \Tilde{m},n} = 0\)  
together with Lemma~\ref{lemma:mass-mat:stiff-mat} for \(\Tilde{m}\).

Now, we can estimate the bounds for the minimum and maximum eigenvalues of \(\mathbf{A}_{f}\) defined in~\eqref{Eq:final_Am}, using the summation of the bounds obtained for the constituting matrices in \eqref{Ineq:upbnd_massmatrix_Mac} - \eqref{Ineq:upperbnd-stiffmatrix-mac} as follows:
\begin{align*}
    \frac{\spta{ \mathbf{A}_{m} \mathbf{c}^{^{m}}, \mathbf{c}^{^{m}}}}{\left\|\mathbf{c}^{^{m}}\right\|_{_{2}}^{^{2}}} =& \left(\frac{1}{\Delta t}+\lambda_{_{m}}\right)\frac{\spta{ \mathbf{M} \mathbf{c}^{^{m}}, \mathbf{c}^{^{m}}}}{\left\|\mathbf{c}^{^{m}}\right\|_{_{2}}^{^{2}}} + 2p_{_{m}} w_{_{m}}\frac{\spta{\mathbf{T}(c^{g,N}) \mathbf{c}^{^{m}}, \mathbf{c}^{^{m}}}}{\left\|\mathbf{c}^{^{m}}\right\|_{_{2}}^{^{2}}} + D_{_{m}}\frac{\spta{ \mathbf{K} \mathbf{c}^{^{m}}, \mathbf{c}^{^{m}}}}{\left\|\mathbf{c}^{^{m}}\right\|_{_{2}}^{^{2}}}\\
    \le&\left(\frac{1}{\Delta t}+\lambda_{_{m}} \right) \zeta^{^{m}}_{1}h^{^{2}} + 2p_{_{m}} w_{_{m}}\zeta^{^{m}}_{_{2}}h^{^{2}} \left\|\mathbf{c}^{u, N}\right\|^{^{2}}_{_{2}}  + D_{_{m}}\zeta^{^{m}}_{_{2}},
\end{align*}
 and 
 \begin{align*}
    \frac{\spta{ \mathbf{A}_{m} \mathbf{c}^{^{m}}, \mathbf{c}^{^{m}}}}{\left\|\mathbf{c}^{^{m}}\right\|_{_{2}}^{^{2}}} =& \left(\frac{1}{\Delta t}+\lambda_{_{m}}\right)\frac{\spta{ \mathbf{M} \mathbf{c}^{^{m}}, \mathbf{c}^{^{m}}}}{\left\|\mathbf{c}^{^{m}}\right\|_{_{2}}^{^{2}}} + 2p_{_{m}} w_{_{m}}\frac{\spta{ \mathbf{T}(c^{g,N}) \mathbf{c}^{^{m}}, \mathbf{c}^{^{m}}}}{\left\|\mathbf{c}^{^{m}}\right\|_{_{2}}^{^{2}}} + D_{_{m}}\frac{\spta{ \mathbf{K} \mathbf{c}^{^{m}}, \mathbf{c}^{^{m}}}}{\left\|\mathbf{c}^{^{m}}\right\|_{_{2}}^{^{2}}}\\
    \ge&\left(\frac{1}{\Delta t}+\lambda_{_{m}} \right) \zeta^{^{m}}_{1}h^{^{2}} + 2p_{_{m}} w_{_{m}}\zeta^{^{m}}_{_{2}}h^{^{2}} \left\|\mathbf{c}^{g, N}\right\|^{^{2}}_{_{2}}.
\end{align*}
Hence the condition number \(k\left(\mathbf{A}_{m}\right)\) assumes the following bound:
\begin{align*}
k\left(\mathbf{A}_{m}\right):=\frac{\Lambda_{_\max}\left(\mathbf{A}_{m}\right)}{\Lambda_{_{min}}\left(\mathbf{A}_{m}\right)}
    &\le \frac{\left(\frac{1}{\Delta t}+\lambda_{_{m}} \right) \zeta^{^{m}}_{1}h^{^{2}} + 2p_{_{m}} w_{_{m}}\zeta^{^{m}}_{_{2}}h^{^{2}} \left\|\mathbf{c}^{g, N}\right\|^{^{2}}_{_{2}}  + D_{_{m}}\zeta^{^{m}}_{_{2}}}{\left(\frac{1}{\Delta t}+\lambda_{_{m}} \right) \zeta^{^{m}}_{1}h^{^{2}} + 2p_{_{m}} w_{_{m}}\zeta^{^{m}}_{_{2}}h^{^{2}} \left\|\mathbf{c}^{g, N}\right\|^{^{2}}_{_{2}}}\nonumber\\
    & = 1 + \frac{D_{_{m}}\zeta^{^{m}}_{_{2}}}{\left(\frac{1}{\Delta t}+\lambda_{_{m}} \right) \zeta^{^{m}}_{1} + 2p_{_{m}} w_{_{m}}\zeta^{^{m}}_{_{2}} \left\|\mathbf{c}^{g, N}\right\|^{^{2}}_{_{2}}} h^{^{-2}}
\end{align*}

\subsection{Condition number of the whole system.}
As shown above, the estimated condition numbers of the \(g\)-, \(f\)-, \(m\)-, and \(e\)-equations at time \(N+1\) depend on the parameter values, the initial condition of the growth factor, and the spatial grid size—except for the ECM equation, whose condition number remains constant.
The condition number of the overall system, \(k(\mathbf{A}_{g,f,m,e})\), at time \(N+1\), is therefore estimated to satisfy the following bound:
\begin{align}
   k(\mathbf{A}_{g,f,m,e}) := \max&\{k(\mathbf{A}_{g}),  k(\mathbf{A}_{f}),
   k(\mathbf{A}_{m}),k(\mathbf{A}_{e})\}\label{Eq:SystCondNbr} \\
   \le \max &\Bigg\{  
   1 +\frac{D_{_{g}}\zeta^{^{g}}_{_{2}}}
   {\left(\frac{1}{\Delta t}+\lambda_{_{g}}\right)\zeta^{^{g}}_{_{1}}} h^{-2},\nonumber \\
   &\quad 1 + \frac{D_{_{f}}\zeta^{^{f}}_{_{2}}}
   {\left(\frac{1}{\Delta t}+\lambda_{_{f}} \right) \zeta^{^{f}}_{1} 
   + 2p_{_{f}} w_{_{f}}\zeta^{^{f}}_{_{2}} 
   \left\|\mathbf{c}^{g, N}\right\|^{2}_{2}} h^{-2}, \nonumber\\
   &\quad 1 + \frac{D_{_{m}}\zeta^{^{m}}_{_{2}}}
   {\left(\frac{1}{\Delta t}+\lambda_{_{m}} \right) \zeta^{^{m}}_{1} 
   + 2p_{_{m}} w_{_{m}}\zeta^{^{m}}_{_{2}} 
   \left\|\mathbf{c}^{g, N}\right\|^{2}_{2}} h^{-2},\quad C 
   \Bigg\}.\nonumber
\end{align}
This implies that, for different choices of parameter values governing the dynamics of the constituent variables, the system exhibits different condition numbers. That is, the individual condition numbers \(k(\mathbf{A}_{g})\), \(k(\mathbf{A}_{f})\), \(k(\mathbf{A}_{m})\), and \(k(\mathbf{A}_{e})\) may dominate the system depending on the specific parameter configuration. We discuss the various possible cases in Tables~\ref{tab:condition-numbers} and \ref{tab:condition-numbers-2}.

\begin{table}[htpb]
\caption{Analysis of Condition Numbers in Biological Models (Part 1).}
\centering
\renewcommand{\arraystretch}{1.5}
\begin{tabular}{@{}c p{3.5cm} p{2cm} p{4cm} p{4cm}@{}}
\toprule
\textbf{Case} & \textbf{Parameter Values} & \textbf{Dominant Term} & \textbf{Biological/Numerical Implication} & \textbf{Condition Number Behavior} \\
\midrule
\textbf{A} & 
\vspace{-0.6cm} \begin{itemize}[]
  \item High $ D_g $
  \item Small $ \lambda_g $, $ \Delta t $, $ h $
\end{itemize} & 
$ k(\mathbf{A}_g) $ & 
\vspace{-0.6cm} \begin{itemize}[]
  \item Growth factor diffuses rapidly.
  \item Slow growth factor decay, fine spatial and temporal resolution.
\end{itemize} & 
\vspace{-0.6cm} \begin{itemize}[]
  \item System becomes stiff due to rapid growth factor spread and low decay.
  \item Growth factor dynamics dominate the system's stability.
\end{itemize} \\

\textbf{B} & 
\vspace{-0.6cm} \begin{itemize}[]
  \item High $ D_f $, low $ \lambda_f $
  \item Low $ \left\| \mathbf{c}^{g,N} \right\|_2^{2}$ or small $ p_f$, $ w_f $
\end{itemize} & 
$ k(\mathbf{A}_f) $ & 
\vspace{-0.6cm} \begin{itemize}[]
  \item Fast fibroblast diffusion and low decay rate.
  \item Low proliferation of fibroblast due to weak growth factor signal caused by low growth factor concentration or low proliferation rate.
\end{itemize} & 
\vspace{-0.6cm} \begin{itemize}[]
  \item Fibroblast dynamics dominate due to high diffusion and slow proliferation. The extra damping term in $k(\mathbf{A}_f)$ is small due to low  $ \left\| \mathbf{c}^{g,N} \right\|_2^{2}$ or small $ p_f$, $ w_f $
\end{itemize} \\

\textbf{C} & 
\vspace{-0.6cm} \begin{itemize}[]
  \item High $ D_m $, low $ \lambda_m $
  \item Low $ \left\| \mathbf{c}^{g,N} \right\|_2^{2}$ or small $ p_m$, $ w_m $
\end{itemize} & 
$ k(\mathbf{A}_m) $ & 
\vspace{-0.6cm} \begin{itemize}[]
  \item Fast macrophage diffusion and low decay rate.
  \item Low proliferation of macrophages due to weak growth factor signal caused by low growth factor concentration or low proliferation rate.
\end{itemize} & 
\vspace{-0.6cm} \begin{itemize}[]
  \item Macrophage dynamics dominate due to high diffusion and slow proliferation. The extra damping term in $k(\mathbf{A}_m)$ is small due to low  $ \left\| \mathbf{c}^{g,N} \right\|_2^{2}$ or small $ p_m$, $ w_m $
\end{itemize} \\

\textbf{D} & 
\vspace{-0.6cm} \begin{itemize}[]
  \item Moderate \(D_{i}, \lambda_{i}\), \(i~\in~\{g,f,m\}\), \( p_{f}, p_{m}\)
  \item Coarse spatial grid (large $h$)
  \item Moderately large $\Delta t$
\end{itemize} & 
$ k(\mathbf{A}_e) $ & 
\vspace{-0.6cm} \begin{itemize}[]
  \item Balanced ECM behavior.
  \item Species interact moderately, no single component dominates the dynamics.
\end{itemize} & 
\vspace{-0.6cm} \begin{itemize}[]
  \item ECM matrix provides a stable baseline. With a coarse grid and moderately large $\Delta t$, the $\Delta t/h^2$ scaling in other matrices becomes small, reducing their condition numbers and allowing the constant $k(\mathbf{A}_e)$ to dominate.
\end{itemize}\\
\bottomrule
\end{tabular}
\label{tab:condition-numbers}
\end{table}

\begin{table}[htpb]
\centering
\renewcommand{\arraystretch}{1.5}
\caption{Analysis of Condition Numbers in Biological Models (Part 2). Continued from Table~\ref{tab:condition-numbers}.}
\begin{tabular}{@{}c p{3.5cm} p{2cm} p{4cm} p{4cm}@{}}
\toprule
\textbf{Case} & \textbf{Parameter Values} & \textbf{Dominant Term} & \textbf{Biological/Numerical Implication} & \textbf{Condition Number Behavior} \\
\midrule

\textbf{E} & 
\vspace{-0.6cm} \begin{itemize}[]
  \item Large $ \left\| \mathbf{c}^{g,N}\right\|^{2}_2 $
  \item High $p_f$, $w_f$, $p_m$, $w_m$
\end{itemize} & 
$ k(\mathbf{A}_g) $ likely dominates, but all $k(\mathbf{A}_i)$ are reduced & 
\vspace{-0.6cm} \begin{itemize}[]
  \item Strong growth factor signal influences fibroblasts and macrophages significantly.
\end{itemize} & 
\vspace{-0.6cm} \begin{itemize}[]
  \item Lower condition numbers overall due to the strong growth factor influence increasing the damping terms in $k(\mathbf{A}_f)$ and $k(\mathbf{A}_m)$. No single matrix excessively dominates.
\end{itemize} \\

\textbf{F} & 
\vspace{-0.6cm} \begin{itemize}[]
  \item High $ \lambda_i $ (all species)
  \item Large $ \Delta t $
\end{itemize} & 
Possibly $ k(\mathbf{A}_e) $ & 
\vspace{-0.6cm} \begin{itemize}[]
  \item High decay rates reduce active dynamics of all species.
\end{itemize} & 
\vspace{-0.6cm} \begin{itemize}[]
  \item System stabilizes due to high decay.  Large $\Delta t$ and $\lambda_i$ reduce the other condition numbers, potentially allowing the constant $k(\mathbf{A}_e)$ to dominate.
\end{itemize} \\

\bottomrule
\end{tabular}
\label{tab:condition-numbers-2}
\end{table}

\subsection{General discussion}
From \eqref{Eq:SystCondNbr}, we see that the system's condition number bound at time \(N+1\) is given roughly by the maximum over the condition number of the four variables (growth factor, fibroblasts, and macrophages), and a constant corresponding to the condition number of the ECM. The bound is of the general form: $1 + ((D_{u} \cdot \zeta^{u}_{2})/((1/\Delta t +\lambda_{u} )\cdot \zeta^{u}_{1} + 2p_{u} w_{u}\zeta^{u}_{2} \left\|\mathbf{c}^{g, N}\right\|^{2}_{2}))  h^{-2},\; u\in\{g, f,m\}$ except for the growth factor which does not contain the additional proliferation term at the denominator. In the above term, $\zeta^{u}_{1}, \zeta^{u}_{2}$ are the general expressions (\textit{i.e.}, not specific to any variable) of those variables defined in Lemmas~\ref{lemma:mass-mat:stiff-mat} and \ref{Lemma:doubleECMKnownSol}, and Remarks~\ref{rem:Af} and \ref{rem:Am}. We emphasize that the extra term in the denominator of the fibroblasts and macrophages (term which contains the proliferation rates $2p_{_{f}}$, $2p_{_{m}}$, multiplied by the volume fraction coefficients $w_{_{f}},w_{_{m}}$, multiplied by the L$^{2}$-norm of the nodal values of the growth factor at time \(t = N\)) has a damping effect on the spatial discretization. An increase/decrease in the values of these extra terms affects the condition number of the fibroblasts and macrophages. 

\subsubsection{The impact of time-step vs. mesh size on the condition number}
In this section, we discuss how the choice of time-step $\Delta t$, and the mesh size $h$, affect the condition number of the system. The key interplay is between the $\left(\frac{1}{\Delta t} + \lambda\right)$ term in the denominator of the condition number and the $h^{-2}$ factor. The main points are summarized in Table~\ref{tab:gridSize-stepSize_summary}, and discussed in more detail in the three cases below.
\paragraph{Case 1: $\Delta t \ll h^2$.}
When $\Delta t$ is very small, $1/\Delta t$ is very large. This makes the term $\left(\frac{1}{\Delta t} + \lambda\right)$ large and \(1/\Delta t \gg \{\lambda \,\,(+ \,\,\text{proliferation term})\}\), thereby increasing the denominator and making the condition number approximately equal to $\Delta t \cdot D_{(g,f,m)})\cdot h^{-2}$. Consequently, for a fixed $h$, the bound $1 + (\text{constant}/\text{large denominator}) \cdot h^{-2}$ becomes smaller, as the very small value of \(\Delta t\) cushions the effect of $h^{-2}$ making the system better conditioned. However, using an extremely small $\Delta t$ might be computationally expensive, even though it helps to ``tame'' the diffusive stiffness coming from the $h^{-2}$ factor.

\paragraph{Case 2: $\Delta t \gg h^2$.}
If $\Delta t$ is large, then $1/\Delta t$ is small and the contribution of $\lambda$ (and any proliferation term) will matter more. A smaller denominator (caused by a small $1/\Delta t$) makes the quotient larger, amplifying the $h^{-2}$ contribution and potentially leading to a larger condition number. Numerically, this means that if the time-step is too large relative to the spatial resolution (or the diffusion rates), the discrete system can become very ill-conditioned.

\paragraph{Case 3: $\Delta t \approx h^{2}$.}
In many practical problems involving diffusion, one is careful to choose $\Delta t$ and $h$ so that $\Delta t/h^2$ remains bounded (or as close to unity as possible). In such balanced cases, the denominator (dominated by $1/\Delta t$) grows as $\Delta t$ is reduced relative to $h^{2}$, thus keeping the overall contribution modest. However, the proliferative terms for fibroblasts and macrophages (i.e., the terms with $p_{f},p_{m}$) provide extra damping; biologically, a system with fast decay and/or strong proliferation is less ``sensitive'' to the discretization, and numerically, the matrices are easier to invert (\textit{i.e}., better conditioned)~\cite{johnston2014scratch, zhang2006morphogen}.

\begin{table}[htbp]
\centering
\caption{Effect of Time-step \( \Delta t \) Relative to Spatial Grid \( h \) on Condition Number}
\label{tab:gridSize-stepSize_summary}
\begin{tabular}{@{}p{2cm} p{5.5cm} p{5.5cm}@{}}
\toprule
\textbf{Condition} & \textbf{Numerical Implication} & \textbf{Biological Implication} \\
\midrule
 \( \Delta t \ll h^2 \) & 
\vspace{-0.6cm}\begin{itemize}[]
    \item Large denominator \( \left( \frac{1}{\Delta t} + \lambda \right) \), so the condition number is small.
    \item System is well-conditioned.
    \item Computational cost is high due to tiny time-steps.
\end{itemize} & 
\vspace{-0.6cm}\begin{itemize}[]
    \item Dynamics change slowly per time-step.
    \item Captures fast diffusion accurately.
\end{itemize} \\[3ex]
 \( \Delta t \gg h^2 \) 
 & 
\vspace{-0.6cm}\begin{itemize}[]
    \item Small denominator leads to larger condition number.
    \item System becomes ill-conditioned due to amplified \( h^{-2} \) effect.
\end{itemize} & 
\vspace{-0.6cm}\begin{itemize}[]
    \item Temporal dynamics become too coarse, potentially missing fast biological processes.
\end{itemize} \\[2ex]

 \( \Delta t \approx h^2 \)  & 
\vspace{-0.6cm}\begin{itemize}[]
    \item Condition number is moderate and bounded.
    \item Numerically stable and efficient.
\end{itemize} & 
\vspace{-0.6cm}\begin{itemize}[]
    \item Captures diffusion and decay dynamics realistically.
    \item Proliferative damping helps stabilize.
\end{itemize} \\
\bottomrule
\end{tabular}
\label{tab:time-step-grid-condition-number}
\end{table}

\section{Summary}
\label{Sec:summary}
The condition number capture how sensitive the discretized model is to perturbations, and this sensitivity is influenced by both biological and numerical parameters in the mathematical model (and in its numerical discretization).  
While there are many published studies investigating condition numbers for local PDE equations and systems of equations discretized via finite element methods, such studies are almost inexistent for non-local PDE systems (that are becoming more-and-more common in the context of biological and medical applications). In this work, we aimed to address this knowledge gap.

The results of this study, focused on the non-local PDE system \eqref{EqModel} that was initially introduced in~\cite{AdebayoEtAl2023,AdebayoTrucuEftimie2025}, showed that high diffusion tends to increase model sensitivity; this makes intuitive sense since rapidly diffusing substances like cells or molecules lead to tightly coupled spatial points~\cite{schmitt2013diffusion, mcgrath2019tightly}. Furthermore as previously mentioned above, the decay and proliferation terms serve as stabilizing ``mass-matrix'' contributions, as fast decay/proliferation can damp the rapid-varying spatial gradients, thus improving numerical conditioning. Lastly, the term $1/\Delta t$ acts as a sort of ``numerical regularizer''; hence, choosing a small $\Delta t$ (that scales appropriately with $h$) increases the denominator and improves the conditioning of the system. 

In biological terms, the results of this theoretical study suggest that high diffusion rates of chemical species or cells, accompanied by low secretion/proliferation rates and/or low decay rates, may lead to numerically stiff systems. Conversely, if we have very high decay or proliferation rates in the biological system, then even with little diffusion, the system remains numerically manageable provided that the time step is chosen appropriately. 

Note that while we talked about the importance of the proliferation rates ($p_{f,m}$), we included here implicitly also the role played by the terms $w_{f,m}$ that multiply the proliferation rates in the denominator of condition number. These terms are the volume fraction indices, and appear only in the non-local advection fluxes (to avoid cell overcrowding). The approximations that we made for $\mathbf{A}_{f}$ (in Remark~\ref{rem:Af}) and for $\mathbf{A}_{m}$ (in Remark~\ref{rem:Am}) to simplify our calculations, while acceptable numerically (see Figure~\ref{Fig-L2NormDifference}), also caused the disappearance of all other terms in the non-local fluxes (e.g., haptotactic rates $\mu_{f,m}$) from the estimated condition numbers. Hence, future work will have to focus on developing more accurate estimates for the condition numbers in the context of non-symmetric matrices arising from implicit schemes applied to the non-local adhesion fluxes (see the discussion in Remark~\ref{rem:Af} about the term  \(-\mu_{_{f}}\int_{\Omega}\spta{A_{f}^{N}[u],f^{N+1}\,\nabla  v_{_{f}}} \text{d}\mathbf{x} \)).  

The current study used FEniCS to implement the finite element method (FEM) discretization of the non-local PDE model~\eqref{EqModel}. 
{But this FEniCS implementation was only meant to be a proof of concept for this class of non-local PDE models (other software could also be used; see the discussion in~\cite{MarguetEftimieLozinski}). We acknowledge that further numerical investigation is required, including the investigation of potential benefits of implementing adaptive mesh refinements for steep gradient regions. Addressing this matter will require the extension of the current FEM discretisation framework for these non-local models (as FEniCS implementation of this class of non-local models is not always straightforward, as shown in~\cite{MarguetEftimieLozinski} for a very simple non-local equation).}\\

\textbf{\large{Acknowledgements.}} This work (OA,RE) was supported by the ANR grant ANR-21-CE45-0025-01.

\textbf{\large{Conflict of interest.}} The authors declare no conflict of interest.

\begin{appendix}

\section{Appendix}

\subsection{Numerical discretization details}
\label{appendix-full-discretization}
Here, we give the full details of the temporal and spatial discretization of model~\eqref{EqModel}. For the spatial discretization, we make use of the finite element methods as outlined in~\eqref{Eq:space-discetization} and denote nodal values $c_{\tau}^{u}:= c_{\tau}^{u, N+1}$, \(u\in\{g,f,m,e\}\). For simplicity, we also denote \(\psi_{\tau} := \psi_{\tau}(\mathbf{x})\).
For the time discretization, we make use of the backward ({i}mplicit) Euler method as outlined in (\ref{Eq:time-discetization}). The model now reads as follows: \\

{Find} $\{c^{g}_{\tau}\}_{\tau=1}^l, \{c^{f}_{\tau}\}_{\tau=1}^l, \{c^{m}_{\tau}\}_{\tau=1}^l, \{c^{e}_{\tau}\}_{\tau=1}^l$ \emph{such that}
\begin{subequations} 
\label{WEqModel3} 
\begin{align*} 
\sum_{\tau=0}^{l} \frac{c_{\tau}^{g}- c_{\tau}^{g, N}}{\Delta t}\int\limits_{\Omega}\psi_{\tau}\psi_{o}\,d\mathbf{x} =&\int\limits_{\Omega} \sum_{\tau=0}^{l} \left[- D_{_{g}}c_{\tau}^{g}\spta{\nabla \psi_{\tau},\nabla \psi_{o}} + \left(p_{_{g}}^{f} {c_{\tau}^{f, N}}
+ p_{_{g}}^{m} {c_{\tau}^{m, N}}- \lambda_{_{g}} c_{\tau}^{g}
\right)\psi_{\tau}\psi_{o} \right]d\mathbf{x}\\
&\forall\; o\in \{1, \ldots, l\},\\[1ex]
 \sum_{\tau=0}^{l} \frac{c_{\tau}^{f} - c_{\tau}^{f, N}}{\Delta t}\int\limits_{\Omega}\psi_{\tau}\psi_{o}\,d\mathbf{x} = &\int\limits_{\Omega} \sum_{\tau=0}^{l}c_{\tau}^{f} \Bigg[-D_{_{f}} c_{\tau}^{f}\spta{\nabla \psi_{\tau},\nabla \psi_{o}} + \mu_{_{f}}\spta{ A_{_{f}}^{^{N}}, \psi_{\tau} \nabla\psi_{o}}\\
&+\Bigg( p_{_{f}}\left(\sum_{s = 1}^{l} c_{q}^{g, N}\psi_{q}\right) \left(1 - \sum_{q = 1}^{l}\psi_{s} \left[c_{s}^{g,N}+ c_{s}^{f} + c_{s}^{m,N}+ c_{s}^{e,N}\right]\right) \\
&-\lambda_{_{f}}\Bigg)\psi_{\tau} \psi_{o}\Bigg]\,d\mathbf{x}\quad 
\forall\; o\in \{1, \ldots, l\},\\
\sum_{\tau=0}^{l} \frac{c_{\tau}^{m} - c_{\tau}^{m, N}}{\Delta t}\int\limits_{\Omega}\psi_{\tau}\psi_{o}\,d\mathbf{x} = &\int\limits_{\Omega} \sum_{\tau=0}^{l}c_{\tau}^{m} \Bigg[-D_{_{m}} c_{\tau}^{m}\spta{\nabla \psi_{\tau},\nabla \psi_{o}} + \mu_{_{m}}\spta{ A_{_{m}}^{^{N}}, \psi_{\tau} \nabla\psi_{o}}\\
&+\Bigg( p_{_{m}}\Bigg(\sum_{q = 1}^{l} c_{q}^{g, N}\psi_{q}\Bigg) \Bigg(1 - \sum_{s = 1}^{l}\psi_{s} \left[c_{s}^{g,N}+ c_{s}^{f,N} + c_{s}^{m}+ c_{s}^{e,N}\right]\Bigg) \\
&-\lambda_{_{m}}\Bigg)\psi_{\tau} \psi_{o}\Bigg]\,d\mathbf{x}\quad 
\forall\; o\in \{1, \ldots, l\},
\end{align*}
\begin{align*}
\sum_{\tau=0}^{l} \frac{c_{\tau}^{e} - c_{\tau}^{e, N}}{\Delta t}\int\limits_{\Omega}\psi_{\tau}\psi_{o}\,d\mathbf{x}= &\int\limits_{\Omega}\Bigg[\sum_{\tau=0}^{l}c_{\tau}^{e}\Bigg[p_{e}\left(\sum_{q = 1}^{l} c_{q}^{f, N}\psi_{q}\right)\left(1 - \sum_{s = 1}^{l}\psi_{s} \left[c_{s}^{g,N}+ c_{s}^{f,N} + c_{s}^{m,N}+ c_{s}^{e}\right]\right)\\
&- \sum_{r = 1}^{l}\psi_{r}\left[\alpha_{_{f}}c_{r}^{f, N} +\alpha_{_{m}}c_{r}^{m, N}\right] + \alpha_{e}\Bigg]\psi_{\tau} + e_{c}\Bigg]\psi_{o}\,d\mathbf{x}\quad
\forall\; o\in \{1, \ldots, l\}.
\end{align*}

\end{subequations}
\subsection{Computational implementation details}
\label{Appendix:num-det}
Here we provide some additional information related to the software used, the initial conditions, and the parameter values related to Figure~\ref{Fig-L2NormDifference}. \\
For the numerical simulations performed throughout this study, we used FEniCS (an open source finite element computing software: \url{https://fenicsproject.org/}). \\
The time interval considered in this study was $I = [0,100]$. At the boundary, we assumed zero-flux boundary conditions (see also equations \eqref{EqBC:ExisUniq}). In fact, the boundary conditions for fibroblasts and macrophages are now reduced to simple homogenous Neumann boundary conditions after approximating non-local adhesion terms \(A[u]_{f}^{N}\) and \(A[u]_{m}^{N}\) as shown in \eqref{Eq:nonlocal:Approx-f} and \eqref{Eq:nonlocal:Approx-m}), respectively. \\
For the numerical simulations we used the following initial conditions:
\begin{eqnarray}
 \label{InitCond:wound:a}
    \!\!\!\!\!g(\mathbf{x},0) &=& 0.1,\nonumber\\
    \label{InitCond:wound:b}
   \!\!\!\!\!f(\mathbf{x},0) &=& 0.2\!\left[\big(0.5+0.5\tanh(20x_{_1}\!\!-3)\big)\!+\!\big(0.5+0.5 \tanh(-20x_{_1}\!\!-3)\big)\right] \!,\nonumber\\
   \label{InitCond:wound:c}
   \!\!\!\!\! m(\mathbf{x},0) &=& 0.5\!\left[\big(0.5+0.5 \tanh(20x_{_1}\!\!-3)\big)\!+\!\big(0.5+0.5 \tanh(-20x_{_1}\!\!-3)\big)\right]\!,\\
   \!\!\!\!\!e(\mathbf{x},0) &=& 1.0\!\left[\big(0.5+0.5 \tanh(20x_{_1}\!\!-3)\big)\!+\!\big(0.5+0.5\tanh(-20x_{_1}\!\!-3)\big)\right]\!,\nonumber
    \label{InitCond:wound:d}
\end{eqnarray}
on the square domain $\Omega = [-1,1]^{^{2}}$ with \(\Delta t = 0.2\). \\
Finally, the parameter values used for the simulations are listed in Table~\ref{Table-Param}. 
\begin{longtable}{p{1.0cm}p{1.0cm}p{7cm}p{1.5cm}}

\hline
Param. & Value & Description & Reference\\
\hline
$D_{_{g}}$ & $0.0035 $ & Diffusion coeff. for growth-factor population & \cite{DomschkeTrucuGerischChaplain}\\
$D_{_{f}}$ & $0.0008$ & Diffusion coeff. for fibroblast population & \cite{DomschkeTrucuGerischChaplain, AdebayoTrucuEftimie2025}\\
$D_{_{m}}$ & $0.0008 $ & Diffusion coeff. for macrophage population & \cite{DomschkeTrucuGerischChaplain, AdebayoTrucuEftimie2025}\\
$\lambda_{_{g}}$ & $0.2$ & Decay rate of growth-factor population & \cite{AdebayoEtAl2023, AdebayoTrucuEftimie2025}\\
$\lambda_{_{f}}$ & $0.025 $ & Apoptotic rate of fibroblast population & \cite{AdebayoEtAl2023, AdebayoTrucuEftimie2025} \\
$\lambda_{_{m}}$ & $0.025$ &  Apoptotic rate of macrophages population & \cite{AdebayoEtAl2023, AdebayoTrucuEftimie2025}\\
$p_{_{g}}^f$ & $0.2$ & Secretion rate of growth-factor by fibroblasts & \cite{AdebayoEtAl2023, AdebayoTrucuEftimie2025}\\
$p_{_{g}}^m$ & $0.2$  & Secretion rate of growth-factor by macrophages & \cite{AdebayoEtAl2023, AdebayoTrucuEftimie2025}\\
$p_{_f}$ & $5.0$  & Proliferation rate of fibroblasts population depending on the growth factor & \cite{AdebayoEtAl2023, AdebayoTrucuEftimie2025}\\
$p_{_m}$ & $5.0$  &  Proliferation rate of macrophages population depending on the density of the growth factor & \cite{AdebayoEtAl2023}\\
$\alpha_{_{f}}$ & $0.015$ & Degradation rate of ECM by fibroblasts & \cite{DomschkeTrucuGerischChaplain, AdebayoTrucuEftimie2025}\\
$\alpha_{_{m}}$ & $0.015$ & Degradation rate of ECM by macrophages & \cite{DomschkeTrucuGerischChaplain, AdebayoTrucuEftimie2025}\\
{$\alpha_{_{e}}$ }& {$0.05$} & {Degradation rate of ECM by other cells} & \cite{AdebayoEtAl2023, AdebayoTrucuEftimie2025}\\
{$e_{_{c}}$} & {$0.1$} &  {ECM secreted by other cells} & \cite{AdebayoEtAl2023, AdebayoTrucuEftimie2025}\\
$p_{_{e}}$ & $5.0$ & Remodelling rate of ECM population & \cite{AdebayoEtAl2023, AdebayoTrucuEftimie2025}\\
$w_{_{g}}$ & $1$ & Fraction of physical space occupied by growth factor &  \cite{AdebayoEtAl2023, Robinetal2019, AdebayoTrucuEftimie2025}\\
$w_{_{f}}$ & $1$ & Fraction of physical space occupied by fibroblasts & \cite{AdebayoEtAl2023, Robinetal2019, AdebayoTrucuEftimie2025}\\
$w_{_{m}}$ & $1$ & Fraction of physical space occupied by macrophages & \cite{AdebayoEtAl2023, Robinetal2019, AdebayoTrucuEftimie2025}\\
$w_{_{e}}$ & $1$ & Fraction of physical space occupied by ECM & \cite{AdebayoEtAl2023, Robinetal2019, AdebayoTrucuEftimie2025}\\
${S}_{_{ff}}^\max$ & $0.2$ & Maximum strength of fibroblast-fibroblast adhesive junction & \cite{AdebayoEtAl2023, AdebayoTrucuEftimie2025}\\
${S}_{_{fm}}^\max$ & $0.1$ & Maximum strength of fibroblast-macrophages adhesive junction & \cite{GerischChaplain, AdebayoTrucuEftimie2025}\\
${S}_{_{mf}}^\max$ & $0.1$ & Maximum strength of macrophages-fibroblast adhesive junction & \cite{GerischChaplain, AdebayoTrucuEftimie2025}\\
${S}_{_{mm}}^\max$ & $0.2$ & Maximum strength of macrophages-macrophages adhesive junction & \cite{AdebayoEtAl2023, AdebayoTrucuEftimie2025}\\
${S}_{_{fe}}^\max$ & $0.1$ & Maximum strength of fibroblast-ECM adhesive junction & \cite{GerischChaplain, AdebayoTrucuEftimie2025}\\
${S}_{_{me}}^\max$ & $1.0$ & Maximum strength of macrophages-ECM adhesive junction & \cite{GerischChaplain, AdebayoTrucuEftimie2025}\\
$\mu_{_f}$ & $0.08$ & Haptotactic rate of the fibroblasts & \cite{AdebayoEtAl2023, AdebayoTrucuEftimie2025} \\
$\mu_{_m}$ & $0.08$ & Haptotactic rate of the macrophages &\cite{AdebayoEtAl2023, AdebayoTrucuEftimie2025} \\
$R$ & $0.1$ & Sensing radius for the non-local interaction & \cite{GerischChaplain}\\
$\sigma$ &$0.04$& standard deviation of the kernels&\cite{AdebayoEtAl2023, AdebayoTrucuEftimie2025}\\
\hline
\caption{ Summary of dimensionless model parameters and the baseline values used for the numerical simulations.}
\label{Table-Param}
\end{longtable}

\subsection{Spatio-temporal simulations corresponding to Figure 1}\label{Sect:Appendix3}
In Figure~\ref{fig:model_dynamic}, we show the spatio-temporal dynamics of the non-local model ~\eqref{EqModel}, for the same parameter values as in Figure~\ref{Fig-L2NormDifference}. We see that an initial linear cut in the tissue domain (described by a sharp decrease at $t=0$ in the middle of the domain, in the densities of ECM, as well as fibroblasts and macrophages) heals quickly due to an increased number of macrophages and fibroblasts (at $t=3$). By time $t=10$ the cut vanishes (i.e., ECM approached a spatially homogeneous steady state).
More numerical simulations (for different parameter values) showing the varied spatio-temporal dynamics of this non-local model~\eqref{EqModel} can be found in references~\cite{AdebayoEtAl2023, AdebayoTrucuEftimie2025}. 
\begin{figure}
    \centering
    \includegraphics[width=1.0\linewidth]{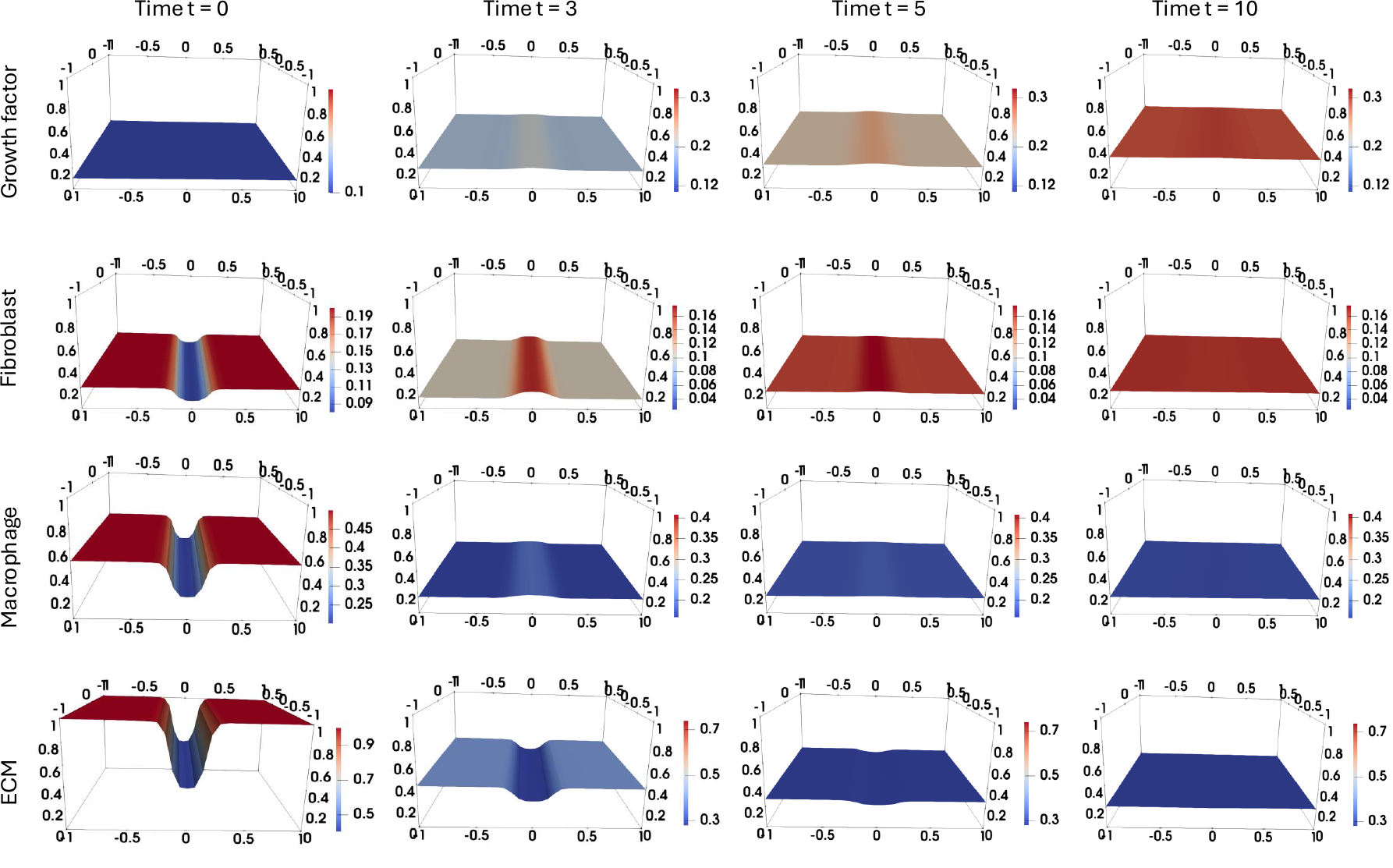}
    \caption{Numerical simulation for the model corresponding to Figure~\ref{Fig-L2NormDifference}. The initial conditions, parameter values and other numerical details are shown in Appendices~\ref{appendix-full-discretization} and \ref{Appendix:num-det}.}
    \label{fig:model_dynamic}
\end{figure}
\end{appendix}

\bibliographystyle{plain}
\bibliography{cond}
\end{document}